\title[Saturations of binomial ideals]{The norm of the saturation of a binomial ideal, and applications to Markov bases}
\let\oref\ref
\numberwithin{equation}{subsection}
\newcommand*{\doublerightarrow}[2]{\mathrel{
  \settowidth{\@tempdima}{$\scriptstyle#1$}
  \settowidth{\@tempdimb}{$\scriptstyle#2$}
  \ifdim\@tempdimb>\@tempdima \@tempdima=\@tempdimb\fi
  \mathop{\vcenter{
    \offinterlineskip\ialign{\hbox to\dimexpr\@tempdima+1em{##}\cr
    \rightarrowfill\cr\noalign{\kern.5ex}
    \rightarrowfill\cr}}}\limits^{\!#1}_{\!#2}}}
\newcommand*{\triplerightarrow}[1]{\mathrel{
  \settowidth{\@tempdima}{$\scriptstyle#1$}
  \mathop{\vcenter{
    \offinterlineskip\ialign{\hbox to\dimexpr\@tempdima+1em{##}\cr
    \rightarrowfill\cr\noalign{\kern.5ex}
    \rightarrowfill\cr\noalign{\kern.5ex}
    \rightarrowfill\cr}}}\limits^{\!#1}}}
\newcommand{\on}[1]{\operatorname{#1}}
\newcommand{\bb}[1]{{\mathbb{#1}}}
\newcommand{\ca}[1]{{\mathcal{#1}}}
\newcommand{\bd}[1]{{\mathbf{#1}}}
\newcommand{\ul}[1]{{\underline{#1}}}
\newcommand{\abs}[1]{\lvert#1\rvert}
\newcommand{\sub}{\subseteq}
\theoremstyle{definition}
\newtheorem{definition}{Definition}[section]
\theoremstyle{plain}% default
\newtheorem{proposition}[definition]{Proposition}
\newtheorem{lemma}[definition]{Lemma}
\newtheorem{theorem}[definition]{Theorem}
\newtheorem{corollary}[definition]{Corollary}
\theoremstyle{remark}
\newtheorem{remark}[definition]{Remark}
\renewcommand{\phi}{\varphi}
\author{David Holmes}
\date{\today}
\newcounter{nootje}
\newcommand\todo[1]{[*\thenootje]\marginpar{\tiny\begin{minipage}
{20mm}\begin{flushleft}\thenootje : 
#1\end{flushleft}\end{minipage}}\addtocounter{nootje}{1}}
\newcommand{\expanded}[1]{#1}
\renewcommand{\expanded}[1]{}
\newcommand{\mat}[1]{\begin{bmatrix}#1\end{bmatrix}}
\newcommand{\laurent}[2]{[#1_1^{\pm 1}, \dots, #1_{#2}^{\pm 1}]}
\begin{document}
\maketitle
\begin{abstract} 
Given a pure binomial ideal $I$ in variables $x_i$, we define a new measure of the complexity of the saturation of $I$ with respect to the product of the $x_i$, which we call the \emph{norm}. We give a bound on the norm in terms of easily-computed invariants of the ideal. We discuss statistical applications both practical and theoretical. 
\end{abstract}

%\shorttableofcontents{Contents}{1}

\tableofcontents

\section{Introduction}\label{sec:intro}

\subsection{Background}
Let $A$ be a $k \times r$ matrix with integer entries, and let $u \in \bb N^r$ be a vector with non-negative entries. The \emph{fibre containing $ u$} is defined as
\begin{equation}
\ca F( u) = \{ v \in \bb N^r : Au = Av\}. 
\end{equation}
Understanding the structure of this fibre is important in a number of statistical tests. For example, the vectors in $\bb N^r$ might represent tables of data, and the matrix $A$ might output the row and column sums of these tables, so the fibre consists of all tables with non-negative entries and with the same row and column sums as the starting table $u$. See \cite{Diaconis1998Algebraic-algor} for more details and examples. In particular, one often wants to generate samples from some probability distribution (often uniform or hypergeometric) on the fibre. If the fibre is small it is practical to simply enumerate all the elements of the fibre. However, in practical applications the fibre is often far too large to enumerate, and the standard approach is to perform a \emph{random walk} in the fibre, generating samples via the Metropolis-Hastings Markov-Chain Monte-Carlo algorithm. In order to perform a random work, we must upgrade the fibre into a graph (whose vertices are the elements of the fibre). The requirements for the Metropolis-Hastings algorithm are rather mild, the key condition is that the graph must be \emph{connected} (since the random walk will always remain within its starting connected component). 

\subsubsection{A random walk in the fibre}\label{sec:naive_algorithm}The most naive way to convert the fibre into a graph is to choose a generating set $B$ for the kernel $K\sub \bb Z^r$ of $A$, and then form a (simple, undirected) graph by putting an edge between distinct vertices $v_1$ and $v_2$ whenever $v_1 - v_2 \in B$ or $v_2 - v_1 \in B$. We say $\ca F(u)$ is \emph{connected by $B$} if the resulting graph is connected. In \ref{sec:examples} we will see several examples of $B$ that \emph{fail} to connect $\ca F(u)$. The major innovation of Diaconis and Sturmfels \cite{Diaconis1998Algebraic-algor} was to give an algorithm to construct a generating set $B$ which connects \emph{every} fibre of a given matrix $A$. 

%The question of whether this graph is \emph{connected} (we say $\ca F(u)$ is \emph{connected by $G$}) is of central importance, since it is a necessary and sufficient condition for Markov chain on $\ca F(u)$ induced by $G$ to converge to the required distribution (see \ref{asd} for more details). 

\subsubsection{Saturated ideals and connected fibres}\label{sec:sat_ideals}
To describe their result, we need a little more notation. Given $b \in B$, we write $b = b^+ - b^-$, both summands having non-negative entries. In the ring $R = \bb Z[x_1, \dots, x_r]$ we form the elements 
\begin{equation}\label{eq:xg}
x^{b^+} \coloneqq \prod_{i=1}^r x_i^{b^+_i}, \;\;\; x^{b^-} \coloneqq \prod_{i=1}^r x_i^{b^-_i}, 
\end{equation}
and define an ideal $\ca I_B = (x^{b^+} - x^{b^-} : b \in B)\sub R$. Then the key theorem is:
\begin{theorem}[Diaconis-Sturmfels, \cite{Diaconis1998Algebraic-algor}]\label{thm:DS}
Fix a $k \times r$ matrix $A$, and let $B$ be a generating set for the integral kernel of $A$. Suppose the ideal $\ca I_B$ is saturated with respect to the element $x_1\cdots x_r \in R$. Then for every $u \in \bb N^r$, the fibre $\ca F(u)$ is connected by $B$. 
\end{theorem}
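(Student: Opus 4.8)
The plan is to prove the contrapositive: if some fibre $\ca F(u)$ fails to be connected by $B$, then $\ca I_B$ is not saturated with respect to $x_1\cdots x_r$. The link between the two sides is the multigrading on $R=\bb Z[x_1,\dots,x_r]$ in which $x_i$ has degree $a_i$, the $i$-th column of $A$. Since $b=b^+-b^-\in B$ lies in the kernel of $A$ we have $Ab^+=Ab^-$, so every generator $x^{b^+}-x^{b^-}$ of $\ca I_B$ is homogeneous; thus $\ca I_B$ is a homogeneous ideal, and for $d=Au$ the homogeneous component $(\ca I_B)_d$ is supported entirely on the single fibre $\ca F(u)$, since the monomials of degree $d$ are exactly $\{x^v:v\in\ca F(u)\}$.

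First I would identify $(\ca I_B)_d$ combinatorially. Regard $R_d$ as the free module on the vertex set $\ca F(u)$. Any homogeneous element of $\ca I_B$ of degree $d$ is a sum of terms $x^w(x^{b^+}-x^{b^-})$ with $w+b^+\in\ca F(u)$, and each such term equals $\pm(x^{p}-x^{q})$ where $\{p,q\}=\{w+b^+,\,w+b^-\}$ is an edge of the graph that $B$ puts on $\ca F(u)$ (or it is $0$). Hence $(\ca I_B)_d$ is the $\bb Z$-span of the edge vectors $x^p-x^q$. By the classical description of the column space of the incidence matrix of a graph, this span consists of exactly those vectors whose coordinate sum over each connected component vanishes. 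In particular, if $v_1,v_2\in\ca F(u)$ lie in \emph{different} components then $x^{v_1}-x^{v_2}\notin(\ca I_B)_d$, and so $x^{v_1}-x^{v_2}\notin\ca I_B$.

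Next I would show that saturation would make this binomial enter $\ca I_B$ anyway. Since $Av_1=Au=Av_2$, the difference $v_1-v_2$ lies in the kernel of $A$, so it can be written as $\sum_{b\in B}n_bb$ with $n_b\in\bb Z$. Spelling this combination out as a walk $v_1=p_0,p_1,\dots,p_\ell=v_2$ in $\bb Z^r$ with consecutive differences $p_i-p_{i-1}\in B\cup(-B)$, the vertices $p_i$ stay within a bounded distance of $\{v_1,v_2\}$ (the bound depending only on $B$ and the fixed expression), so translating every $p_i$ by $M\cdot(1,\dots,1)$ with $M$ large enough puts them all in $\bb N^r$. For a walk inside $\bb N^r$ each step contributes $x^{p_{i-1}}-x^{p_i}=\pm x^{\min(p_{i-1},p_i)}(x^{b^+}-x^{b^-})\in\ca I_B$ (coordinatewise minimum), and telescoping the translated walk yields $(x_1\cdots x_r)^M(x^{v_1}-x^{v_2})=x^{v_1+M\bd 1}-x^{v_2+M\bd 1}\in\ca I_B$. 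Combined with the previous paragraph this shows $\ca I_B$ is not saturated, contradicting the hypothesis; hence every fibre is connected by $B$.

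The step I expect to be the main obstacle is the walk-and-translate argument: one must fix a single integer expression of $v_1-v_2$ over $B$, bound the partial sums of that expression uniformly, and check that the telescoped identity remains exact once all exponents have been shifted into $\bb N^r$. The rest is linear algebra over $\bb Z$ (or $\bb Q$) together with the grading bookkeeping, and the real idea is simply that saturating by $x_1\cdots x_r$ is the algebraic counterpart of being allowed to leave the fibre, move freely, and return.
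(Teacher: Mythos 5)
The paper does not prove this theorem; it is stated and attributed to Diaconis--Sturmfels \cite{Diaconis1998Algebraic-algor}, so there is no in-paper argument to compare against. Judged on its own terms, your proof is correct and is essentially the standard modern argument, just packaged as a contrapositive.

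The two load-bearing steps both check out. First, $\ca I_B$ is homogeneous for the $\bb Z^k$-multigrading $\deg x_i = a_i$, so the degree-$d$ piece $(\ca I_B)_d$ (with $d=Au$) is exactly the $\bb Z$-span of $\{m_i g_i\}$ with $g_i$ a generator and $m_i g_i$ of degree $d$; each such element is an (oriented) edge vector $x^p - x^q$ of the fibre graph, and conversely every edge arises this way via $w=\min(p,q)$. The identification of the $\bb Z$-span of edge vectors with the lattice of vectors summing to zero on each connected component is valid over $\bb Z$, not just over a field (spanning-tree / total unimodularity), which is what you need to conclude $x^{v_1}-x^{v_2}\notin(\ca I_B)_d$ for $v_1,v_2$ in distinct components. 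Second, the walk-and-translate argument correctly shows $(x_1\cdots x_r)^M(x^{v_1}-x^{v_2})\in\ca I_B$: once every partial sum $p_i+M\bd 1$ is nonnegative, each step is literally a monomial multiple of a generator, and the sum telescopes. This exhibits $x^{v_1}-x^{v_2}$ in the saturation but not in $\ca I_B$, so $\ca I_B$ is not saturated, which is the contrapositive you wanted.

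One presentational remark: what you have actually isolated is the full ``fundamental theorem of Markov bases'', i.e.\ the equivalence $\ca I_B$ saturated $\iff$ every fibre connected. Your first paragraph is the $\Leftarrow$ direction stated as a contrapositive, and your second paragraph (showing $x^{v_1}-x^{v_2}$ always lies in the saturation) is the content of the $\Rightarrow$ direction. Phrasing it directly---``$x^{v_1}-x^{v_2}$ always lies in $\on{Sat}_{x_1\cdots x_r}\ca I_B=\ca I_B$, and a binomial of degree $d$ in $\ca I_B$ forces its exponents into the same component''---would make the two halves of the equivalence, and the role of the incidence-matrix lemma, more transparent; but nothing in the logic is missing.
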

If $\ca I_B$ is saturated, $B$ is often called a \emph{Markov basis} (this should not be interpreted as implying linear independence of the elements of $B$). The theorem then tells us that we can generate samples according to our preferred distribution by following the naive random walk algorithm above using the basis $B$. 

On the other hand, suppose that we have a generating set $B$ such that $\ca I_B$ is not saturated. We can (at least in principal) apply a standard saturation algorithm to $\ca I_B$ to produce a saturated ideal, and moreover the generating set produced will in fact consist of pure difference binomials (i.e. differences of monomials; see \ref{def:binomial}). Reversing the procedure \ref{eq:xg} we can recover a new generating set $B'$ for the kernel $K$ of $A$, and following the above theorem of Diaconis-Sturmfels, this generating set will connect all fibres, enabling efficient sampling.  

Thus, when it is possible to compute this saturation, the problem is essentially solved. However, the standard algorithm for saturation involves the computation of $r$ Gr\"obner bases, and is at present only practical for relatively small examples (software to carry out such computations can be found at \url{4ti2.de}). 

\subsubsection{Connected fibres without saturation}\label{sec:AHT_algorithm}
The difficulty of computing the saturation motivated Aoki, Hara and Takemura \cite{Hara2012Running-Markov-} to suggest an algorithm for generating samples without needing to compute the saturation. They begin in the same say, with a generating set $B = \{b_1, \dots, b_n\}$ for the integral kernel, but instead of making moves consisting of addition or subtraction of a single element of $B$, they instead generate $n$ non-negative integers $a_i$ from a Poisson distribution with some chosen mean $\lambda$, and $n$ elements $\epsilon_i \in \{ +1, -1\}$, and their move consists of addition of $\sum_i \epsilon_i a_i b_i$ if the result lies in the fibre, and staying put otherwise. Since the Poisson distribution generates every non-negative integer with non-zero probability it is immediate that the resulting fibre is connected; in fact, the graph on the fibre is a complete graph, but with highly non-uniform probability of selecting moves from among edges. 

They then perform a number of numerical experiments with various values of $\lambda$. In cases where it was possible to compute the saturation, they show that for careful choice of $\lambda$ their algorithm performs comparably to that coming from a Markov basis, and they also illustrate that their algorithm can be applied in cases where the saturation is too hard to compute (though they can of course provide no guarantee that their algorithm is converging in reasonable time; it appears to do so, but this might be deceptive if the fibre has some connected components that are very hard to hit --- see \ref{sec:practical}). 

There is some tension in the use of this algorithm when it comes to choosing the value of $\lambda$. If one chooses $\lambda$ very large then the algorithm takes a long time before it (appears to) converge. On the other hand, a small value of $\lambda$ will product more rapid \emph{apparent} convergence, but there is a greater risk that one is simply failing to see one or more connected components of the fibre in the time for which the algorithm is run.

\subsection{Results}
\subsubsection{A bound on the complexity of the saturation}
In the light of the above discussion it is natural to try to bound how large and complex the saturation of the ideal $\ca I_B$ can get. To make this more precise, we define the \emph{norm} of the saturation: 

\begin{definition}\label{def:norm}
Let $B$ be set of $n \ge 1$ vectors in $\bb Z^r$. We write $\ca I_B$ for the ideal in $R = \bb Z[x_1, \dots, x_r]$ as defined in \ref{sec:sat_ideals}. The \emph{norm} of $B$ is the smallest integer $N \ge 1$ such that there exists a finite generating set $G$ for the saturation of $\ca I_B$ with respect to $x_1\cdots x_r$, with the properties that 
\begin{enumerate}
\item
Every element of $G$ is a pure difference binomial;
\item 
Every $g \in G$ can be written in the form 
\begin{equation}
g = \sum_{i=1}^N \epsilon_i m_i (x^{b_i^+} - x^{b_i^-}). 
\end{equation}
where the $\epsilon_i \in  \{-1,0,1\}$, the $m_i$ are monomials, and the $b_i$ are elements of $B$. %\todo{: Need to pass from vectors to binomials for these elements of $B$}
%there exist integers $\epsilon_1, \dots, \epsilon_N \in \{-1,0,1\}$, monomials $m_1, \dots, m_n \in R$ and a function $b\colon \{1, \dots, N\} \to B$, such that 
\end{enumerate}
\end{definition}

The main result of this paper is the following explicit bound on the norm. In \ref{sec:improving_AHT,sec:alt_algo} we will show how this can be applied to give new algorithms for sampling from fibres without needing to compute the saturation. 

%, as described in \ref{sec:state_main_thm}. In \ref{sec:monte_carlo_intro} we briefly discuss how this can be applied to give new algorithms for sampling from fibres; more details are given in \ref{asd}
%\subsection{Statement of the main theorem}\label{sec:state_main_thm}

\begin{theorem}\label{thm:main}
Let $B$ be set of $n \ge 1$ vectors in $\bb Z^r$. Write $\beta$ for the maximum of the absolute values of the coefficients of elements of $B$. Then the norm of $B$ is at most 
\begin{equation}\label{eq:big_constant}
n(2n\beta)^{n-1}. 
\end{equation}
\end{theorem}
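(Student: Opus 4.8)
The saturation of $\ca I_B$ with respect to $x_1 \cdots x_r$ can be understood via the lattice $L = \mathbb Z B$, the subgroup of $\mathbb Z^r$ generated by $B$. In fact, for a pure-difference binomial ideal, the saturation $(\ca I_B : (x_1 \cdots x_r)^\infty)$ is precisely the lattice ideal $I_L = (x^{\ell^+} - x^{\ell^-} : \ell \in L)$, so the content of the theorem is a statement about expressing each binomial $x^{\ell^+} - x^{\ell^-}$, for $\ell \in L$, as a $\mathbb Z[x]$-linear combination $\sum_i \epsilon_i m_i (x^{b_i^+} - x^{b_i^-})$ with few terms and with monomial coefficients, uniformly over a generating set $G$ of $I_L$. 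I would first reduce to a generating set: by the theory of lattice ideals, $I_L$ is generated by the binomials $x^{\ell^+} - x^{\ell^-}$ where $\ell$ ranges over a finite set — and one can take $\ell$ with $\|\ell\|_\infty$ bounded, say, by a Graver- or circuit-type bound, but for the norm we only need to control how such an $\ell$ decomposes along $B$, not its size.

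**The telescoping identity and the sorted decomposition.** The basic mechanism is the telescoping identity: if $\ell = \sum_{j=1}^k b_{i_j}$ is written as an ordered sum of (not necessarily distinct) elements of $B$, then setting $s_p = \sum_{j \le p} b_{i_j}$, one writes
\begin{equation}\label{eq:telescope}
x^{\ell^+} - x^{\ell^-} = \sum_{p=1}^k x^{c_p}\bigl(x^{(s_{p-1}+b_{i_p})^{+}} \cdot \text{(correction)} \bigr),
\end{equation}
where more carefully each step $x^{s_{p-1}} \to x^{s_p}$ contributes a monomial multiple of $\pm(x^{b_{i_p}^+} - x^{b_{i_p}^-})$ after clearing denominators; the point is that the $c_p$ are genuine monomials (non-negative exponents) precisely when the partial sums $s_p$ are "balanced enough", which is why one must choose the ordering carefully. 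So the real task has two parts: (i) show that every generator $\ell$ of $L$ can be written as an ordered sum of at most $n(2n\beta)^{n-1}$ elements of $B$ such that all the partial-sum corrections stay monomial — or, more robustly, bound the number of ordered summands needed in an expression where telescoping produces honest monomial coefficients; and (ii) handle the bookkeeping of signs $\epsilon_i$.

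**Where the bound comes from.** The constant $n(2n\beta)^{n-1}$ has the shape of (number of generators $n$) times (a bound of the form $(2n\beta)^{n-1}$ on coordinates of a vector in $L$ expressed in the basis $B$, or on the size of a minimal "conformal" decomposition). I expect the argument runs: pick a $\mathbb Z$-basis of $L$ extracted from $B$ (at most $n$ elements, in fact at most $r$, but $n$ is the clean bound); Cramer's rule / Siegel's lemma bounds the coefficients needed to express any other element of $B$, and any Graver element of $L$, in terms of that basis by something like $(n\beta)^{n-1}$ up to the factor $2$; then the telescoping \eqref{eq:telescope} over a path of that length, reordered so partial sums remain dominated, yields the stated number of terms. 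The main obstacle is step (i): controlling the ordering so that every partial sum $s_p$ satisfies $s_p^+ \le \ell^+$ coordinatewise (equivalently, the path from $0$ to $\ell$ in $\mathbb Z^r$ stays inside the box $[0,\ell^+] \times \dots$ after shifting) — this is exactly the kind of "conformality"/monotone-path statement that is false for an arbitrary ordering and requires either an inductive rearrangement argument or an appeal to a Gordan–Dickson-style finiteness together with a quantitative bound on how far one must detour. Getting the \emph{explicit} constant $n(2n\beta)^{n-1}$ out of this, rather than just \emph{some} constant, is the delicate part; I would isolate it as a lemma of the form: every element of a lattice $L = \mathbb Z\langle b_1,\dots,b_n\rangle$, $\|b_i\|_\infty \le \beta$, admits a conformal expression as a $\pm$-sum of at most $n(2n\beta)^{n-1}$ of the $b_i$, and then feed that into the telescoping identity.
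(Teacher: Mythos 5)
Your sketch correctly identifies the telescoping mechanism and the role of a Siegel-type bound, but it misplaces the source of the constant and leaves the central finiteness step unaddressed. The paper does not bound the decomposition of each element of a Markov or Graver basis of the lattice $L$; such a bound would depend on $A$, not only on $n$ and $\beta$. Instead it works directly in the space $\bb N^n$ of multiplicity vectors $t$. For each sign vector $\epsilon$ it forms the iterated subtraction binomial
\begin{equation*}
S(\epsilon, t) = \prod_{i=1}^n \bigl(f_i^{\epsilon(i)}\bigr)^{t(i)} - \prod_{i=1}^n \bigl(f_i^{-\epsilon(i)}\bigr)^{t(i)},
\end{equation*}
shows that every pure binomial in the saturation is of this form up to monomial factors (\ref{Q:structure_of_pure_binomials}), and then partitions $\bb N^n$ into polyhedral cones $T_{\epsilon,\delta}$ according to which side attains the minimum power of each $x_j$ dividing $S(\epsilon,t)$. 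A subtraction-polynomial identity (\ref{lem:phi_t_in_ideal}) shows that the binomials $\phi_t$ for $t$ ranging over a generating set of each cone $T_{\epsilon,\delta}$ already generate the saturation; the defining hyperplanes of these cones have integer coefficients bounded in absolute value by $\beta$, so Siegel's lemma bounds the $L^1$-length of the extremal rays by $(2n\beta)^{n-1}$, and \ref{lem:cone_gens} together with a simplicial subdivision supplies the extra factor of $n$. This cone decomposition in $t$-space is the structural idea your proposal lacks: it replaces ``bound the Markov basis and then decompose it along $B$'' by a finiteness statement intrinsic to $B$, which is why the answer can be phrased purely in terms of $n$ and $\beta$.

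Your conformality worry, while natural, is not where the difficulty lies. If one insists on genuine (non-Laurent) monomials $m_i$ in \ref{def:norm}, then any $g$ admitting such an expression already lies in $\ca I_B$, which is impossible whenever the saturation strictly contains $\ca I_B$; in the paper's own example in \ref{sec:v_simple_example}, $x_1x_4 - x_2x_3$ generates the saturation but does not lie in $\ca I_B$, yet the norm is declared to be $2$. The coefficients must be read as Laurent monomials, and this is all that the downstream argument in \ref{sec:proof_of_cor} uses, since it only extracts exponent vectors. With Laurent monomial coefficients the telescoping is valid for any ordering, so the rearrangement problem you isolate as the ``delicate part'' evaporates, leaving the cone decomposition as the genuine content of the proof.
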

Our proof (see \ref{sec:proof_of_main})is constructive; it gives an algorithm to determine a generating set $F$ as in the definition of the norm. We do not know whether this algorithm could be practical; it is a-priori less efficient than procedures using Gr\"obner bases, but is highly parallelisable. 

The connection of the norm to fibre connectivity and Markov chains runs via the following result (proven in \ref{sec:proof_of_cor}):
\begin{proposition}\label{cor:connected}
Let $A$ be a $k \times r$ integer matrix, and $B = \{b_1, \dots, b_n\}$ a basis of the kernel, with $B$ having norm $N$. Let $u \in \bb N^r$, and construct a graph with vertex set the fibre $\ca F(u)$, and where we draw an edge from $v_1$ to $v_2$ if and only if $v_1 - v_2$ can be written as an integer linear combination
\begin{equation*}
v_1 - v_2 = \sum_{i=1}^n a_i b_i
\end{equation*}
with $\sum_{i=1}^n \abs{a_i} \le N$. Then this graph is connected. 
\end{proposition}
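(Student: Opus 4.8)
The plan is to use the norm to manufacture a genuine Markov basis of $\ker A$ whose basic moves are short integer combinations of the $b_i$, and then to invoke \cref{thm:DS}.

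First I would unwind the hypothesis. Since $B$ has norm $N$, \cref{def:norm} provides a finite generating set $G$ of the saturation $\ca I \coloneqq (\ca I_B : (x_1\cdots x_r)^\infty)$ in which every element is a pure difference binomial $g = x^{g^+} - x^{g^-}$ admitting an expression $g = \sum_{i=1}^N \epsilon_i m_i(x^{b_i^+} - x^{b_i^-})$ with $\epsilon_i \in \{-1,0,1\}$, the $m_i$ monomials, and the $b_i \in B$. Reversing the passage from a vector $b$ to the binomial $x^{b^+}-x^{b^-}$, set $\tilde g \coloneqq g^+ - g^- \in \bb Z^r$ and $\tilde G \coloneqq \{\tilde g : g \in G\}$. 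Using standard facts on pure binomial ideals and their saturations, together with the observation that $\ker A$, being the kernel of an integer matrix, is a saturated sublattice of $\bb Z^r$, I would then note that $\ca I_{\tilde G} = \ca I$, that this ideal is saturated with respect to $x_1\cdots x_r$, and that $\tilde G$ generates $\ker A$. The only slightly delicate point is that $\tilde G$ spans all of $\ker A$ and not merely a finite-index sublattice: this follows because a pure binomial ideal is saturated precisely when the lattice spanned by the vectors of its binomials is saturated, so that lattice must equal $\ker A$ (it already contains every $b_i$).

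Now \cref{thm:DS}, applied with $\tilde G$ in the role of $B$, shows that $\ca F(u)$ is connected by $\tilde G$: the graph $\Gamma_{\tilde G}$ on $\ca F(u)$ with an edge between $v_1$ and $v_2$ whenever $v_1 - v_2 \in \pm \tilde G$ is connected. Let $\Gamma_N$ denote the graph of the proposition. Since $\Gamma_{\tilde G}$ and $\Gamma_N$ have the same vertex set, it suffices to show that every edge of $\Gamma_{\tilde G}$ is an edge of $\Gamma_N$; then $\Gamma_N$ contains a connected spanning subgraph and is itself connected.

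It therefore remains to show that for each $g \in G$ the vector $\tilde g$ is an integer combination of the $b_i$ with the sum of the absolute values of the coefficients at most $N$. Here I would use the expression $g = \sum_{i=1}^N \epsilon_i m_i(x^{b_i^+} - x^{b_i^-})$; the subtlety is that after expansion this sum undergoes many cancellations, so one cannot read the exponents of $g$ off termwise. To get around this, apply to the identity the $\bb Z$-linear map $R \to \bb Z$ given by differentiating with respect to $x_j$, multiplying by $x_j$, and then setting every variable equal to $1$. This map sends a monomial $x^a$ to its $j$-th coordinate $a_j$, and sends each binomial $m_i(x^{b_i^+}-x^{b_i^-}) = x^{c_i+b_i^+} - x^{c_i+b_i^-}$ (with $m_i = x^{c_i}$) to the $j$-th coordinate of $b_i$; letting $j$ range over $1,\dots,r$ yields $\tilde g = \sum_{i=1}^N \epsilon_i b_i$ in $\bb Z^r$. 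Regrouping by the distinct elements of $B$ occurring in this sum gives $\tilde g = \sum_\ell a_\ell b_\ell$ with $a_\ell \in \bb Z$ and $\sum_\ell \abs{a_\ell} \le \sum_{i=1}^N \abs{\epsilon_i} \le N$, and negating handles $-\tilde g$; so each edge of $\Gamma_{\tilde G}$ is an edge of $\Gamma_N$, which completes the proof. I expect the two points needing most care to be the passage in the second paragraph back to an honest generating set of $\ker A$ (so that \cref{thm:DS} genuinely applies), and the cancellation phenomenon just discussed, which the differentiation trick disposes of cleanly.
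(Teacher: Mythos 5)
Your overall strategy matches the paper's: reduce to \cref{thm:DS} by showing the exponent vectors of a generating set for the saturation are short integer combinations of the $b_i$. But your treatment of the key technical step is genuinely different and, to my mind, cleaner. The paper proves a lemma by induction on $M$: it peels off one term $\epsilon_M m_M f_{j_M}$ at a time, carefully re-matching monomials after each cancellation, to show that if $\sum_{i=1}^M \epsilon_i m_i f_{j_i}$ collapses to a pure binomial $x^{c^+}-x^{c^-}$ then $c^+ - c^- = \sum a_i b_i$ with $\sum\abs{a_i}\le M$. Your ``differentiate, multiply by $x_j$, set $x=1$'' trick achieves exactly the same conclusion in one stroke: the map $\Phi_j\colon R \to \bb Z$, $f\mapsto (x_j\partial_{x_j}f)\rvert_{x=1}$ is $\bb Z$-linear, sends $x^a\mapsto a_j$, and sends each shifted binomial $m_i(x^{b_i^+}-x^{b_i^-})$ to $(b_i)_j$ (independently of the monomial $m_i$), so applying $\Phi_j$ to the defining identity and letting $j$ range over $1,\dots,r$ immediately gives $\tilde g = \sum_{i=1}^N\epsilon_i b_{j_i}$. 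This sidesteps entirely the cancellation bookkeeping that drives the paper's induction, and makes it transparent that the bound $\sum\abs{a_i}\le N$ only uses $\abs{\epsilon_i}\le 1$. That part of your proof is correct and arguably preferable.

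Your second paragraph, justifying that \cref{thm:DS} applies to $\tilde G$, is where the paper is terser (it simply says ``it suffices (by \ref{thm:DS})''); you commendably try to fill in why $\tilde G$ generates $\ker A$. However, the specific claim you invoke --- ``a pure binomial ideal is saturated precisely when the lattice spanned by the vectors of its binomials is saturated'' --- is false: $(x^2-1)\subset\bb Z[x]$ is saturated with respect to $x$ yet the corresponding lattice $2\bb Z\subset\bb Z$ is not saturated. Saturation of the ideal and saturation of the lattice are independent conditions (the latter governs primality of the lattice ideal, not ideal-theoretic saturation). The conclusion you want is still true, but for a different reason: $\ca I_{\tilde G}$ equals $\ca I_G = \operatorname{Sat}_{x_1\cdots x_r}\ca I_B$, which is the lattice ideal of $L=\ker A$; on the other hand the saturation of $\ca I_{\tilde G}$ is the lattice ideal of the lattice $L'$ spanned by $\tilde G$; since $\ca I_{\tilde G}$ is already saturated, $I_{L'} = I_L$, and the lattice-ideal correspondence then forces $L'=L$. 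So replace the false equivalence with that argument (or simply cite the standard fact, already used implicitly in the paper at the end of \ref{sec:sat_ideals}, that reversing the passage $b\mapsto x^{b^+}-x^{b^-}$ on any pure-binomial generating set of the saturation gives a generating set of $\ker A$) and your proof is complete.
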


\begin{remark}
Given a $k \times r$ integral matrix $A$, note that it is easy to compute a basis $B$ of the integral kernel of $A$ from the Smith normal form of $A$. Indeed, if $SAT = D$ is the Smith normal form (so $S$ and $T$ are invertible, and $D$ diagonal with $D_{i,i}\mid D_{i+1, i+1}$), then let $1 \le j \le r$ be maximal such that $D_{j,j} \neq 0$. Then an integral basis of the kernel of $A$ is given by $Te_{j+1}, \dots, Te_r$, where $e_i$ is the $i$th standard basis vector in $\bb Z^r$. 

Conversely, while $B$ does not determine $A$, it does determine the fibres $\ca F(u)$, so the matrix $A$ is not really essential, but is very relevant to the statistical applications. 
\end{remark}

\subsubsection{Comparison to other results in the literature}

Needless to say, we are not the first to try to control the complexity of the saturation of an ideal in a polynomial ring. Indeed, the standard method of computing the saturation reduces to a Gr\"obner basis computation, whose efficient implementation has been the focus of too much research to begin to list here. Specialising to the case of binomial ideals, the literature is still much too large to give more than a quick glimpse of. There are general theoretical results on the structure of fibre graphs (\cite{Windisch2016Rapid-mixing-an}, \cite{Hemmecke2015On-the-connecti}, \cite{Windisch2019The-fiber-dimen}, \cite{Gross2013Combinatorial-d}, ...). There are also many results bounding the degree of the binomials appearing in the saturation, see (\cite[chapter 13]{Sturmfels1996Grobner-bases-a}, \cite{Haws2014Markov-degree-o}, \cite{Koyama2015Markov-degree-o}, ...), and bounding the \emph{Markov complexity}; this is defined in \cite{Santos2003Higher-Lawrence}, and studied in \cite{Charalambous2014Markov-complexi} and elsewhere. 

However, we are not aware on bounds on the `norm' (\ref{def:norm}) in the literature. Indeed, from an algebraic point of view it appears a rather unnatural invariant. The reason for studying it comes purely from the application (via \ref{cor:connected}) to fibre connectivity and Markov bases. In the remainder of \ref{sec:intro} we hope to justify it from this point of view, and perhaps motivate further research in this direction. An unusual feature of our results is that we do not utilise Gr\"obner bases of related techniques; this is not from dislike, but simply because we could not see how to bound the norm from that perspective; we hope that others may have more success. 

\subsubsection{Improving the algorithm of Aoki, Hara and Takemura}\label{sec:improving_AHT}

Aoki, Hara and Takemura connect the fibre by allowing arbitrarily large integer linear combinations of elements of the basis $B$. However, \ref{cor:connected} shows that it suffices to take combinations with coefficients bounded by the norm $N$ of $B$. This allows us to improve the efficiency of their algorithm, by truncating the Poisson distribution at $N$. A second algorithm they present (where the coefficients of the $b_i$ are chosen from a multinomial distribution) can be enhanced in a similar way. The bound on the norm coming from \ref{thm:main} is likely to be large in comparison with the chosen $\lambda$, so will not have a large impact on the runtime, but we hope that better bounds on the norm can be found in future. 

A more useful application might be to predicting good values of the constant $\lambda$ in their algorithm, or giving heuristic bounds on the convergence time for a given value of $\Lambda$. The norm $N$ can be seen as the maximum distance between connected components of the fibre, thus to be have a reasonable chance of hitting all components we should take a number of steps that is very large compared to $1/\bb P(\on{Poisson}_\lambda \ge N)$. %In order for this to be practical in usefully-large examples, it would be very useful to improve the bound $N$. 

%For given $\lambda$, the term $1/\bb P(Poisson_\lambda \ge N)$ is bounded above by 
%\begin{equation*}
%\frac{e^{-\lambda}(e\lambda)^N}{N^N}, 
%\end{equation*}
%so we want 
%
% exponential in $N$, thus exponential in $\beta$ and doubly-exponential in $n$. 

%Heuristically, it seems that the optimal value of $\lambda$ (large enough to find all connected components in a reasonable timespan, but small enough to hit the fibre with high frequency) will be proportional to the bound $N$. Based on numerical experiments (see \ref{later section}), we tentatively recommend choosing
%\begin{equation}
%\lambda = ?? N,
%\end{equation}
% though this would certainly benefit from further numerical and theoretical exploration. In particular, the optimal $\lambda$ should really be proportional to the `optimal' value of $N$, and it is not clear whether this will be approximately proportional to the value of $N$ coming from \ref{thm:main}. 

\subsubsection{An alternative algorithm for connecting the fibres}\label{sec:alt_algo}

In the naive algorithm of \ref{sec:naive_algorithm}, one starts at a vector $v \in \ca F(u)$, and chooses at random an element $b \in \pm B$, and considers the step $v + b$. If $v + b$ in $\ca F(u)$ then this is returned as the next element of the Markov chain. If $v + b \notin \ca F(u)$, then the algorithm simply returns $v$. However, if we have a bound on the norm then we can modify the algorithm so that the fibre will always be connected; if $v + b \notin \ca F(u)$ then, rather than returning $v$, we choose another element $b_1$ from $\pm B$, and consider the vector $v + b + b_1$. If $v + b + b_1$ lies in $\ca F(u)$ we return is as the next step in the Markov chain, otherwise we repeat, until we either hit $\ca F(u)$ again, or we have taken $N$ consecutive steps outside the fibre, in which case we return $v$ again. Alternatively, this can be viewed as a weighted random walk in a certain graph with vertex set $\ca F(u)$. To define this graph, we first define a graph $\ca F_\bb Z(u)$ with vertex set $\{ v \in \bb Z^r : Au = Av\}$ and with an edge between $v_1$ and $v_2$ whenever $v_1 - v_2 \in \pm B$. Then we define a graph with vertex set $\ca F(u)$ by putting an edge between two vertices whenever they can be connected by a path in $\ca F_\bb Z(u)$ of length at most $N$, and which does not intersect $\ca F(u)$ except at its endpoints. Again, by \ref{cor:connected} this new graphs is guaranteed to be connected. %In \ref{asd} we describe certain numerical experiments with this algorithm. 

More generally, with \ref{thm:main,cor:connected} in hand it is easy to propose new sampling algorithms which guarantee to connect the fibre. The challenge is to design algorithms with reasonable runtime, at least heuristically (rigorous runtime analysis seems hard but very interesting). 

If the fibre $\ca F(u)$ is large with respect to the norm $N$ then designing reasonably efficient algorithms is not hard, since the runtime will be dominated by time spent in the `interior' of the fibre. On the other hand, if the fibre is small compared to $N$ then the runtime will be dominated by time spent around the edge of the fibre looking for new connected components, and will depend sensitively on the norm (or more precisely, on our bound on the norm). %In some situations, when the fibre is large there alternative asymptotic estimates that can be used for these statistical problems, so the case when the fibre is relatively small is of particular interest. 

%\subsection{Application to Monte-Carlo sampling methods}\label{sec:monte_carlo_intro}

%The difficulty of computing the saturation motivates the search for algorithms for sampling uniformly from the fibres $\ca F(u)$ which do \emph{not} require the complete computation of a Markov basis. One such algorithm was proposed by ... \cite{asd}. Essentially, the fibre fails to be connected by a given generating set $B$ because it is impossible to step between certain pairs of elements of the fibre $\ca F(u)$ without going outside the fibre (see \ref{asd}). In loc.cit., instead of selecting moves randomly from $\pm B$, they select moves which are linear combinations of elements of $B$, with coefficients samples from a Poisson distribution. Because every element of $\on{ker}A$ will then be chosen with non-zero probability, it is clear that the resulting graph on $\ca F(u)$ will be connected. In \ref{asd} we show how our \ref{thm:main} can be used to improve the runtime of this algorithm, and we also propose a new algorithm which we expect to be faster when the fibre is very large. 

%\subsection{Bounding the degree of a Markov basis}
%This doesn't really seem to work; we would also need to control the degrees of the monomials appearing in the algorithm, and the way it is set up is not well-tuned for this. 

\subsection{Practical consequences}\label{sec:practical}

The algorithm of \ref{sec:AHT_algorithm} is proven to converge. And in practise the Markov chain is often observed to settle down quite fast. Indeed, in practise it is the latter which will generally be relied upon; people run algorithms until the chain appears to converge. However, there is a critical problem here. Namely, we see in \ref{sec:eg_arb_bad_connected} examples where the chain will \emph{appear} to converge very rapidly, but this `apparent' limit will not be the true limit (the runtime required to achieve true convergence may easily be arranged to exceed the lifespan of the solar system). We hope that this kind of pathological behaviour will be very rare in practise, but at present this seems hard to verify. Our aim in this paper is to get an idea of how long the algorithm should be run in order to be reasonably confident that the `apparent limit' of the chain is in fact the true limit. We are not completely successful in this, partly because our bound on the norm is rather large for practical use (and probably not sharp), and also because passing from the bound in \ref{thm:main} to an estimate on the convergence time needs substantial further work. We think it is interesting and useful to investigate this further. In the meantime, we would encourage people this type of algorithm to let it run for as long as possible, even after the chain appears to have settled down, to maximise the change of hitting new connected components.

\subsection*{Acknowledgements}
This work owes its existence to a seminar on algebraic statistics organised in Leiden in the Autumn of 2018 by Garnet Akeyr, Rianne de Heide, and Rosa Winter. I am very grateful to them for organising it, for the many expert speakers who took the time to patiently explain basic ideas of probability and statistics to us, and especially to the determined participants who survived to the end, and offered very useful comments on a presentation of the results contained here. 

\section{Examples}\label{sec:examples}
\subsection{A very simple example}\label{sec:v_simple_example}

Consider the matrix 
\begin{equation*}
A = \mat{0&1&2&3\\
3&2&1&0}. 
\end{equation*}
An integral basis for the kernel of $A$ is then given by $B = \{b_1, b_1\}$ where 
\begin{equation*}
b_1 =  \mat{1\\-2\\1\\0}, \;\;\; b_2 = \mat{0 \\ 1\\-2\\1}. 
\end{equation*}
The fibre containing the vector $\mat{2&2&2&2}^T$ is illustrated in \ref{fig}, where red arrows correspond to addition of $b_1$, and blue arrows to addition of $b_2$. Evidently, this fibre is not connected, since the element $\mat{4&0&0&4}^T$ is isolated. Thus is our chain begins anywhere in the large component it will never hit the isolated vertex, and if it begins at the isolated vertex it will remain there. This has practical consequences, since it is common to simply run such a Markov chain until it appears (by eye) to have converged; in this example, convergence will be rapid, but the resulting distribution will not be the expected one (c.f. \ref{sec:practical}). 

The approach of Diaconis-Sturmfels is to replace the basis $B$ by a larger generating set which makes the fibre connected. The ideal $\ca I_B$ is generated by $x_1x_3 - x_2^2$ and $x_2x_4 - x_3^2$, and its saturation can be generated by these two polynomials together with the polynomial $x_1x_4 - x_2x_3$, the latter corresponding to the vector $\mat{1&-1&-1&1}^T$. Clearly one can step from $\mat{3&1&1&3}^T$ to $\mat{4&0&0&4}^T$ by addition of this new vector, so the fibre is indeed connected by this new generating set for the integral kernel of $A$. 

Our approach is to allow the chain to step briefly outside the fibre while it hunts for vectors with non-negative entries. As long as we allow two negative steps the fibre will become connected, as we can step from $\mat{3&1&1&3}^T$ to $\mat{4&0&0&4}^T$ via $\mat{4&-1&2 & 3}^T$ or $\mat{3 & 2 & -1 & 4}^T$; one sees easily that the norm is $2$. Let us compute the bound resulting from \ref{thm:main}: we have $\beta = 2$ and $n=2$, so our bound is $16$. Thus if we use the bound from the theorem we should allow 16 negative steps; it is clear that this will be sufficient to connect the fibre, but also that this bound is not optimal.

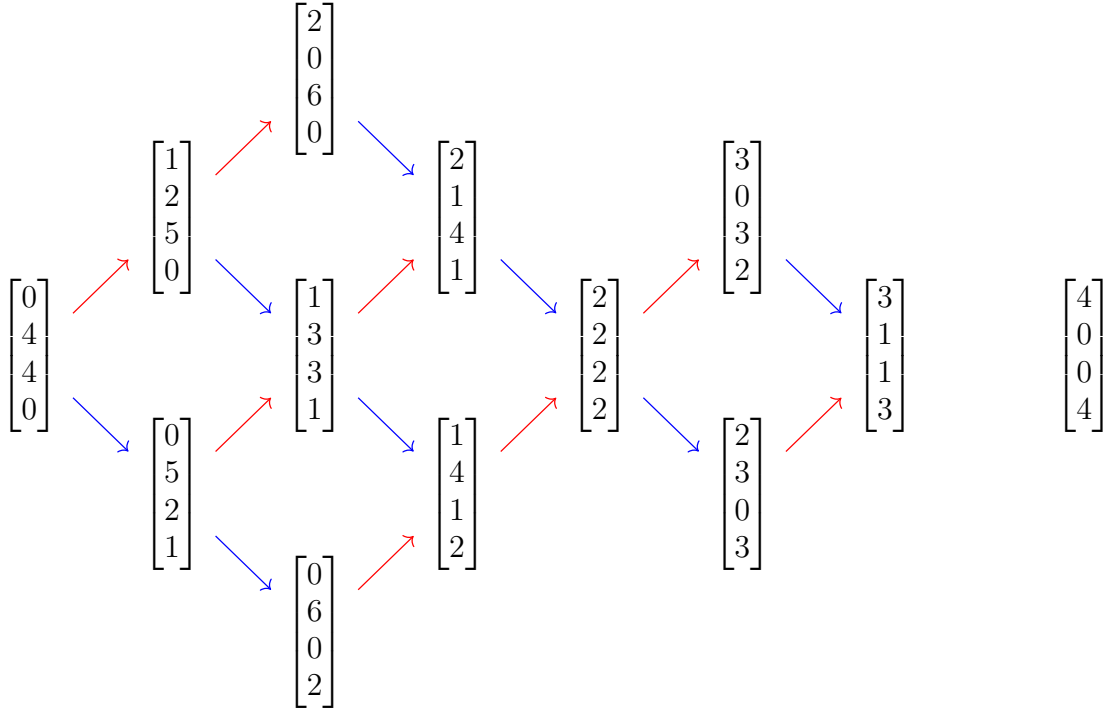
\begin{figure}\caption{A (non-connected) fibre }\label{fig}
 \begin{tikzcd}[row sep = -0.5cm, column sep = scriptsize]
 & & \mat{2\\0\\6\\0} \arrow[dr, blue]  & & & & & &\\
 & \mat{1\\2\\5\\0} \arrow[ur, red] \arrow[dr, blue] & & \mat{2\\1\\4\\1} \arrow[dr, blue]  & & \mat{3\\0\\3\\2}  \arrow[dr, blue]& & & \\
 \mat{0\\4\\4\\0} \arrow[ur, red] \arrow[dr, blue] & & \mat{1\\3\\3\\1} \arrow[ur, red] \arrow[dr, blue] & & \mat{2\\2\\2\\2} \arrow[ur, red] \arrow[dr, blue] && \mat{3\\1\\1\\3} & & \mat{4\\0\\0\\4} \\
&  \mat{0\\5\\2\\1} \arrow[ur, red] \arrow[dr, blue]  & & \mat{1\\4\\1\\2} \arrow[ur, red]  & & \mat{2\\3\\0\\3} \arrow[ur, red]  & & &\\
& & \mat{0\\6\\0\\2} \arrow[ur, red]  & & & & & &\\
\end{tikzcd}
\end{figure}

\subsection{Families where the fibres are arbitrarily badly connected}\label{sec:eg_arb_bad_connected}

Consider the $1 \times 3$ matrix $A = \mat{1 & 1 & 1}$, and write $e_i$ for the $i$th standard basis vector in $\bb Z^3$. Let $u = e_2$. Then the fibre $\ca F(u) = \{e_1, e_2, e_3\} $. For a positive integer $n$, choose the basis 
\begin{equation*}
B_n = \left\{ \mat{0\\1\\-1} , \mat{-1 \\ n \\ 1-n} \right\}
\end{equation*}
of the kernel of $A$. 
Then the fibre consists of two connected components, namely $\{ e_2, e_3\}$ and $\{e_1\}$. Moreover, to step between the connected components requires $(n-1)$ consecutive negative steps. Thus for every positive integer $M$ and every real number $\lambda$ there exists an integer $n$ such that the algorithm of Aoki, Hara and Takemura presented in \ref{sec:AHT_algorithm} applied to the above basis $B_n$ will appear to converge immediately, but will take $M$ steps before the probability of hitting the other connected component rises above any given positive threshold. 

This example is quite artificial, as the fibre is essentially simple, but we have made a poor choice of generating set $B_n$. We can also construct a slightly less artificial example of the same phenomenon, by generalising the example in \ref{sec:v_simple_example}. For an integer $n\ge 2$, let 
\begin{equation*}
A_n = \mat{ 1 & 2 & \cdots & n-1 & n\\ n & n-1 & \cdots & 2 & 1}, 
\end{equation*}
and consider the basis of the integral kernel given by 
\begin{equation*}
B_n = \left\{\mat{1\\-2\\1\\0\\0\\ \vdots \\ 0}, \mat{0 \\ 1\\-2\\1\\0\\ \vdots \\ 0}, \cdots, \mat{0 \\ 0 \\ \vdots\\ 1\\-2\\ 1}\right\}, 
\end{equation*}
where we denote the elements of $B_n$ by $b_2, \dots, b_{n-1}$ in the given order. Then the fibre of $\mat{2 & \cdots & 2}^T$ contains the vector $v = \mat{n &0 & \cdots & 0 & n}^T$. This vector $v$ is at least $n-2$ steps distant from any other point in the fibre; more precisely, if $c_1, \dots, c_r \in \pm B_n$ are such that 
\begin{equation*}
v + \sum_{i=1}^r c_i \in \ca F(v), 
\end{equation*}
then either $r \ge n-2$ or $v + \sum_{i=1}^r c_i = v$ (the bound $n-2$ is in fact sharp). We leave the elementary verification to the interested reader. Again we see that, though the algorithm of \ref{sec:AHT_algorithm} (and variants) may appear to converge rapidly, there are connected components which take an arbitrarily long time to hit.

\subsection{The no-three-factor-interaction model}
This model is described in detail (in particular, its statistical interpretation) in \cite{Aoki2012Markov-bases-in}. It depends on a choice of three positive integers $I$, $J$ and $K$; we will often take $I=J=K$ for simplicity. The matrix $A$ is then an ${(IJ + JK + KI) \times IJK}$ matrix, described in a slightly complicated way. Define $Id_I$ to be the $I \times I$ identity matrix, and $1_I$ to be a row vector of length $I$ with all entries equal to $1$. Then 
\begin{equation*}
 A = \mat{Id_I \otimes Id_J \otimes 1_K\\ Id_I\otimes 1_J \otimes Id_K \\ 1_I \otimes Id_J \otimes Id_K}, 
\end{equation*}
where $\otimes$ represents the Kroneker product of matrices. 

In \cite{Hara2012Running-Markov-}, the authors numerically test their algorithm (\ref{sec:AHT_algorithm}) on the no-three-factor-interaction model in the cases $I = J = K = 3,$ $ 5,$ and $10$. In the case $I=3$ the saturation can be computed by Gr\"obner basis techniques, but seems presently out of reach $I=5$, and worse for $I=10$. In each case they compute a basis for the integral kernel, then run numerical tests of their algorithm for several values of the Poisson parameter $\lambda$, and also occasionally replacing the Poisson with a different distribution (we are not completely clear on how they chose these parameters and distributions). In the case $I=3$ they compare their results to those obtained using a saturated basis, and observe that the Markov chains coming from their algorithm converge similarly to those coming from a saturated basis (though for $\lambda = 50$ the convergence is rather slow). 

For $I=10$ their algorithm does not converge well, but for $I=5$ it appears to converge fairly rapidly. As throughout this paper, the question we are interested in is whether this apparent convergence can be trusted, or is it possible that there is some connected component of the fibre which their chain has never hit? Of course, their algorithm will find every component with probability 1 if allowed to run for unlimited time, but there is no a-priori reason to assume that the time required for this will be in any way comparable to the time required for the chain to appear to settle down. 

To try to get a handle on this, let us compute our upper bound on the number of negative steps required to walk between components (the `distance between' connected components of the fibre). Using SAGE we compute the smith normal form of the $75 \times 125$ matrix $A$, obtaining an integral basis $B$ with $n=64$ elements. The largest absolute value of an entry in $B$ is $\beta = 1$. This leads to an upper bound on the norm by 
\begin{equation}
N' = n(2n\beta)^{n-1} = 64(128)^{63} \approx 3.6\times 10^{134}. 
\end{equation}

Now, in this example Aoki, Hara and Takemura replace the Poisson distribution with a geometric distribution (for reasons which are unclear to us), and try parameters $p = 0.1,$ $0.5$. The proportion of steps in their algorithm which will exceed $N'$ in length is then so small that it is likely never to occur before the sun runs cold. This means that \emph{if} the bound $N'$ were to be close to the true norm, then this algorithm will in practise never converge to the correct solution. In practise, our bound on the norm is surely very far from sharp, but we gave this example to illustrate the difficulty in guaranteeing convergence (despite the fact that the algorithm might appear to the human eye to have converged). 

\section{Proof of the main results}
\subsection{Proof of \ref{thm:main}}\label{sec:proof_of_main}
Let $B = \{b_1, \dots, b_n\}$ be a set of vectors in $\bb Z^r$. Following the notation of \ref{eq:xg}, we write
\begin{equation*}
f_i^+ = x^{b_i^+}, \;\;\; f_i^- = x^{b_i^-}, \;\;\; f_i = f_i^+ - f_i^- 
\end{equation*}
in the ring $R = \bb Z[x_1, \dots, x_r]$. Then $\ca I_B = (f_1, \dots, f_n) \sub R$, and our goal is to bound the norm of the saturation 
\begin{equation}
\on{Sat}_{x_1\cdots x_r} \ca I_B = \{ a \in R : \exists m > 0 :  a(x_1 \cdots x_r)^m \in  \ca I_B \}. 
\end{equation}
\begin{definition}\label{def:binomial}
A \emph{pure binomial} in $R$ is an element of the form $m_1 - m_2$ where the $m_i$ are monomials. An ideal $I \sub R$ is called \emph{pure binomial} if it admits a generating set consisting of pure binomials; evidently, $\ca I_B$ is a pure binomial ideal. 
\end{definition}
\begin{lemma}[\cite{Herzog2018Binomial-ideals}, proposition 3.18]\label{lem:sat_is_pure_binomial}
The saturation of $\ca I_B$ with respect to $x_1\cdots x_r$ is also a pure binomial ideal. 
\end{lemma}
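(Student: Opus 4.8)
The plan is to prove that the saturation $\operatorname{Sat}_{x_1\cdots x_r}\ca I_B$ is a pure binomial ideal by relating it to a localisation and exploiting the monomial structure of the problem. First I would pass to the Laurent polynomial ring $R_x = \bb Z\laurent{x}{r}$ obtained by inverting $x_1\cdots x_r$; since saturation with respect to $x_1\cdots x_r$ is precisely the contraction along $R \to R_x$, i.e. $\operatorname{Sat}_{x_1\cdots x_r}\ca I_B = \ca I_B R_x \cap R$, it suffices to understand the extended ideal $\ca I_B R_x$ and its contraction. The extended ideal is generated by the elements $f_i = x^{b_i^+} - x^{b_i^-}$; multiplying $f_i$ by the unit $x^{-b_i^-}$ we see $\ca I_B R_x$ is generated by the genuinely Laurent binomials $x^{b_i} - 1$, where $x^{b_i} = \prod_j x_j^{(b_i)_j}$ with possibly negative exponents.

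The key structural fact is that an ideal of $R_x$ generated by binomials of the form $x^{a} - 1$ is ``combinatorial'': the lattice $L = \sum_i \bb Z b_i \subseteq \bb Z^r$ (or rather the sublattice it generates together with the relevant saturation data) controls everything, and $x^c - x^d \in \ca I_B R_x$ if and only if $c - d$ lies in the appropriate lattice. Concretely I would show that $\ca I_B R_x$ is spanned as a $\bb Z$-module by the elements $x^c(x^{b} - 1)$ for monomials $c$ and $b \in L$, and that a reduction argument (writing any element as a $\bb Z$-combination of monomials and collecting terms according to their class modulo $L$) shows every element of $\ca I_B R_x$ is a $\bb Z$-linear combination of pure Laurent binomials $x^c - x^d$ with $c - d \in L$. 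Then, given an arbitrary $h \in \operatorname{Sat}_{x_1\cdots x_r}\ca I_B \subseteq R$, I write $h \in \ca I_B R_x$ as such a combination of Laurent binomials; clearing denominators by a single monomial $x^N$ and noting that $h$ itself has no denominators, a telescoping/cancellation argument groups the monomials of $x^N h$ into pure binomials $x^{c} - x^{d}$ with $c-d \in L$ and $x^c, x^d$ actual (non-Laurent) monomials, each of which lies in $\operatorname{Sat}_{x_1\cdots x_r}\ca I_B$. Dividing back by $x^N$ and checking divisibility within each binomial yields a pure binomial generating set for $h$, hence for the whole ideal (which is finitely generated by Noetherianity).

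The main obstacle, and the step that needs genuine care, is the descent from Laurent binomials back to honest polynomial binomials: a priori $x^N h$ decomposes into Laurent binomials $x^c - x^d$ whose common monomial factor need not be visible, so one must argue that the cancellation can be organised so that each resulting binomial, after dividing by the gcd of its two monomials, still lies in the saturation — this uses precisely that the saturation is saturated with respect to all the $x_i$, so that $x^e \cdot g \in \operatorname{Sat}_{x_1\cdots x_r}\ca I_B$ forces $g \in \operatorname{Sat}_{x_1\cdots x_r}\ca I_B$. Everything else is bookkeeping with monomials and lattices. Since the excerpt cites this as Proposition 3.18 of \cite{Herzog2018Binomial-ideals}, I would in practice simply invoke that reference, but the sketch above is the argument underlying it; the only subtlety worth spelling out is that we are saturating by the single element $x_1\cdots x_r$ rather than by each variable separately, and these give the same ideal because $R_x$ is the localisation at the multiplicative set generated by all the $x_i$.
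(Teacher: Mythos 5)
The paper proves this lemma by citation only (Proposition 3.18 of \cite{Herzog2018Binomial-ideals}), so there is no internal proof to compare against; your sketch is an attempt to reconstruct the standard argument, and it follows the right route: identify $\operatorname{Sat}_{x_1\cdots x_r}\ca I_B$ with the contraction $\ca I_B R_x \cap R$, observe that in $R_x$ the ideal is generated by $x^{b_i}-1$ and is therefore governed by the lattice $L = \sum_i\bb Z b_i$, and show that the contracted ideal is spanned by binomials with exponent difference in $L$. This is essentially the group-ring point of view, and it is sound.

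The place where your execution wobbles is the descent step you yourself flag as the ``main obstacle.'' Multiplying by $x^N$ to clear Laurent denominators, decomposing $x^N h$ into honest binomials $x^{c_k}-x^{d_k}$ with $c_k - d_k \in L$, and then ``dividing back by $x^N$'' does not in itself reassemble $h$ as a polynomial combination of pure binomials lying in the saturation: after you divide each $x^{c_k}-x^{d_k}$ by its monomial gcd (which is what ``checking divisibility'' and invoking saturation buys you), the cofactors $x^{\gcd_k - N}$ may have negative exponents, so the sum need not telescope back to $h$ inside $R$. The clean fix makes the $x^N$ step unnecessary. Since $\ca I_B R_x$ is exactly the kernel of $R_x \to \bb Z[\bb Z^r/L]$, an element $h \in \operatorname{Sat}_{x_1\cdots x_r}\ca I_B \subseteq R$ is a $\bb Z$-linear combination of honest monomials whose coefficients, grouped by $L$-coset of the exponent, sum to zero \emph{within each coset}. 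Telescoping within each coset writes $h$ directly as a $\bb Z$-linear combination of honest pure binomials $x^{e}-x^{e'}$ with $e-e'\in L$ and $e,e'\in\bb N^r$, each of which lies in the saturation. No Laurent denominators appear, so no division step is needed, and the result follows immediately (finite generation by a subset of these binomials is then Noetherianity, as you say). So the gap is genuine but small, and the overall strategy is the correct one.
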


\begin{definition}
Given pure binomials $f = f^+ - f^-$ and $g = g^+ - g^-$, we define the \emph{subtraction polynomial} (again a pure binomial)
\begin{equation*}
S(f, g) = g^+f + f^-g = f^+g^+ - f^-g^-. 
\end{equation*}
\end{definition}
If $f$, $g \in \ca I_B$ then clearly $S(f, g)$ lies in $\ca I_B$.  

We make the unsurprising notational conventions that $-- = +$, $+- = -+ = -$ and $++ = +$; thus we interpret $f^{--} = f^+$, which is less usual, but makes for efficient and hopefully comprehensible notation in what follows. 

\begin{definition}\label{def:iter_subtraction}
Let $\epsilon\colon \{1, \dots, n \} \to \{+, -\}$, and let $t\colon \{1, \dots, n\} \to \bb N$. Define
\begin{equation}
S(\epsilon, t) = \prod_{i=1}^n (f_i^{\epsilon(i)})^{t(i)} -  \prod_{i=1}^n (f_i^{-\epsilon(i)})^{t(i)} \in \ca I_B, 
\end{equation}
(here we use our `$- - = +$' convention when we write $f_i^{-\epsilon(i)}$). 
\end{definition}

%\begin{lemma}
%The $S(\epsilon, r)$ are exactly the polynomials obtained by starting with the $f_i$ and iteratively applying the $S$-construction (and then maybe multiplying by $-1$?). 
%\end{lemma}
%\begin{proof}
%Elementary manipulation. 
%\end{proof}
\begin{lemma}\label{Q:structure_of_pure_binomials}
Let $P$ be a pure binomial in $\ca I_B$. Then there exist $\epsilon$, $t$, and monomials $m$ and $ n$ such that
\begin{equation*}
nP = mS(\epsilon, t). 
\end{equation*}
%
%Can every pure binomial in $I$ can be obtained by iterating the $S$ construction applied to the $f_i$, and multiplying by $\pm 1$? No, for example just multiply one of the $f_i$ by a random monomial, say in a variable that doesn't appear in any $f_j$. 
%
%Maybe every pure binomial is of the form `(some monomial) times (something obtained by iterated $S$-polynomials'? 
%
%Still probably not, but I'd like to see an example. And if it is true it will make a later conjecture much easier! 
\end{lemma}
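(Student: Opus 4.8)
The plan is to reduce the statement to the single fact that a pure binomial $x^u - x^v \in \ca I_B$ forces the exponent difference $u-v$ to lie in the subgroup $L \sub \bb Z^r$ generated by $B$; once this is available, everything else is bookkeeping with exponents. So I would first dispose of the trivial case $P=0$ (take $t\equiv 0$, $m=n=1$), and otherwise write $P = x^u - x^v$ with $u\neq v$ and establish $u-v\in L$. I would do this by grading $R$ by the group $G \coloneqq \bb Z^r/L$, with $\deg x^a = a+L$: each generator $f_i = x^{b_i^+}-x^{b_i^-}$ is $G$-homogeneous since $b_i^+ - b_i^- = b_i \in L$, so $\ca I_B$ is a $G$-homogeneous ideal and decomposes as the direct sum of its homogeneous parts. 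If $u-v\notin L$ then $\deg x^u \neq \deg x^v$, so $x^u$ and $-x^v$ are themselves $G$-homogeneous components of $P$ and hence both lie in $\ca I_B$; but $\ca I_B$ contains no monomial, because the ring homomorphism $R\to\bb Z$ sending every $x_i\mapsto 1$ annihilates each $f_i$ and hence all of $\ca I_B$, while sending $x^u$ to $1$. This contradiction yields $u-v\in L$. (One could instead quote the monoid-congruence description of pure binomial ideals, but the grading argument is self-contained and valid over $\bb Z$.)

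Given $u-v\in L$, write $u-v = \sum_{i=1}^n c_i b_i$ with $c_i\in\bb Z$, and set $t(i) = \abs{c_i}\in\bb N$ and $\epsilon(i)$ equal to the sign of $c_i$ (choosing $\epsilon(i)=+$ when $c_i=0$, which is harmless since then $t(i)=0$). Unwinding \ref{def:iter_subtraction} and using $f_i^\pm = x^{b_i^\pm}$, we get $S(\epsilon,t) = x^A - x^C$ with $A = \sum_i t(i)\,b_i^{\epsilon(i)}$ and $C = \sum_i t(i)\,b_i^{-\epsilon(i)}$, both in $\bb N^r$. A direct check gives $A - C = \sum_i t(i)\bigl(b_i^{\epsilon(i)} - b_i^{-\epsilon(i)}\bigr) = \sum_i c_i b_i = u-v$. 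Consequently $A+v = u+C$, whence $x^A\cdot P = x^{A+u} - x^{A+v} = x^{u+A} - x^{u+C} = x^u\cdot S(\epsilon,t)$, and the lemma holds with $n = x^A$ (a monomial, indeed equal to $\prod_i (f_i^{\epsilon(i)})^{t(i)}$) and $m = x^u$.

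I do not anticipate a serious obstacle. The only step that is more than routine exponent manipulation is the homogeneity argument establishing $u-v\in L$ — this is where the hypothesis $P\in\ca I_B$ is genuinely used, and it is the step to write carefully. Everything afterwards (the definitions of $\epsilon$, $t$, $m$, $n$, the identity $A-C=u-v$, and the term-by-term verification of $x^A P = x^u S(\epsilon,t)$) is forced and formal, and the fact that the base ring is $\bb Z$ rather than a field causes no difficulty since both the grading argument and the evaluation homomorphism work verbatim over $\bb Z$.
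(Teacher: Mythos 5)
Your proof is correct, and it takes a genuinely different route from the paper's. The paper argues by induction on the length $k$ of a minimal expression $P = \sum_{j=1}^k m_j f_{i_j}$: it peels off one summand matching $P^+$, applies the inductive hypothesis to the remaining pure binomial, and reassembles using the subtraction operation $S$. That route requires the convention that $-b_i \in B$ for all $i$, several ``up to harmless sign changes'' reorderings, and the observation that $S(f_{i_1}, S(\epsilon,t))$ is again of the form $S(\epsilon', t')$. You instead isolate the one nontrivial fact --- that a pure binomial $x^u - x^v \in \ca I_B$ forces $u-v$ to lie in the lattice $L = \langle b_1, \dots, b_n\rangle$ --- and prove it by grading $R$ by $\bb Z^r/L$ (under which $\ca I_B$ is homogeneous) together with the evaluation $x_i \mapsto 1$ to exclude nonzero monomials from $\ca I_B$. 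After that, writing $u-v = \sum_i c_i b_i$, taking $t(i) = \abs{c_i}$, $\epsilon(i) = \on{sign}(c_i)$, and $A = \sum_i t(i)\, b_i^{\epsilon(i)}$, the identity $x^A P = x^u S(\epsilon,t)$ is a direct exponent computation (using $A - C = u - v$ where $C = \sum_i t(i)\, b_i^{-\epsilon(i)}$). Your argument is non-inductive, dispenses with the $-b_i \in B$ convention and the cancellation bookkeeping, works verbatim over $\bb Z$, and produces $\epsilon$, $t$, $m$, $n$ explicitly rather than merely existentially --- all genuine gains in clarity. The paper's rewriting-style induction is closer in spirit to the Buchberger/S-polynomial picture that informs the subsequent cone analysis, but for this lemma in isolation your route is cleaner and equally rigorous.
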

\begin{proof}
For the purposes of the proof, we will simplify notation by assuming that for every $b_i \in B$, the element $-b_i$ also lies in $B$. 

Let $P \in \ca I_B$ be a pure binomial. Write $P = \sum_{j=1}^k m_j f_{i_j}$, where the $m_j$ are monomials. We can and do assume that $k$ is chosen minimal, and we proceed by induction on $k$. The case $k=1$ is trivial. 

Up to harmless sign changes, there exists a $j_0$ such that $m_{j_0} f_{i_{j_0}}^+ = P^+$. Reordering, we may assume that $j_0=1$, so 
\begin{equation*}
P - m_1 f_{i_1} = \sum_{j=2}^k m_j f_{i_j}
\end{equation*}
is again a pure difference binomial. By the induction hypothesis there exist monomials $m$ and $n$ and vectors $\epsilon$, $t$ with 
\begin{equation*}
m \sum_{j=2}^k m_j f_{i_j} = n S(\epsilon, t). 
\end{equation*}
Write $S(\epsilon, t)  = S^+ - S^-$. Then 
%\begin{equation*}
%mm_1f_{i_i}^-  -  mP^- = m (P - m_1 f_{i_1}) = n S = nS^+ - nS^-. 
%\end{equation*}
%Better:
\begin{equation*}
m P  =  nS^+ - nS^- + m_1 f_{i_1}^+ - m_1 f_{i_1}^-. 
\end{equation*}
This this is a binomial, up to signs we may assume without loss of generality that $nS^- = m_1f_{i_1}^+$. We can then write
\begin{equation*}
f_{i_1}^+ m P = n\left(f_{i_1}^+ S^+ - f_{i_1}^-S^-\right) = n S'
\end{equation*}
%
%
%, so there exists a laurent monomial $\phi$ with 
%\begin{equation*}
%n = \phi f_{i_1}^+ \;\; and \;\; m_1 = \phi S^-. 
%\end{equation*}
%Hence in the group ring we have 
%\begin{equation*}
%mP = \phi f_{i_1}^+ S^+  - \phi S^- f_{i_1}^- = \phi S', 
%\end{equation*}
where $S'$ is an iterated subtraction binomial of the $f_i$. 
%Then writing $\phi = a/b$ a ratio of monomials we obtain
%\begin{equation*}
%mbP = aS'
%\end{equation*}
%as required. 
\end{proof}

\begin{theorem}\label{thm:binomials_from_subtraction}
There exist a positive integer $M$, functions $\epsilon_1, \dots, \epsilon_M$ and $t_1, \dots, t_M$ as in \ref{def:iter_subtraction}, and monomials $m_1, \dots, m_M \in R$, such that
\begin{enumerate}
\item
for all $1 \le j \le M$ we have $m_j \mid S(\epsilon_j, t_j)$;
\item 
\begin{equation*}
\on{Sat}_{x_1\cdots x_r} \ca I_B = \left( \frac{S(\epsilon_j, t_j)}{m_j} : 1 \le j \le M \right). 
\end{equation*}
\end{enumerate}
\end{theorem}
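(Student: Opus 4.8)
The plan is to produce the required data directly from a finite pure--binomial generating set of the saturation, using \ref{Q:structure_of_pure_binomials} to turn each generator into a subtraction binomial, and some elementary bookkeeping about primitive pure binomials to strip off the extra monomial factors.

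Write $J = \on{Sat}_{x_1\cdots x_r}\ca I_B$. Since $R$ is Noetherian and $J$ is pure binomial by \ref{lem:sat_is_pure_binomial}, I can fix a finite generating set $P_1, \dots, P_M$ of $J$ consisting of pure binomials; discarding the zero polynomial (the case $J = 0$ being trivial, e.g.\ taking $t\equiv 0$ so $S(\epsilon,t)=0$), I may assume each $P_j\neq 0$. Using that saturation is idempotent, so that $J$ is already saturated, and that the greatest common divisor $x^{w}$ of the two monomials of $P_j$ divides a power of $x_1\cdots x_r$, I may further replace each $P_j$ by its primitive part, so that the two monomials of $P_j$ become coprime: indeed $P_j/x^{w}$ times a suitable power of $x_1\cdots x_r$ equals $P_j$ times a monomial, hence lies in $J$, hence $P_j/x^w\in J$. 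Now for each $j$, because $P_j\in J$ there is an integer $s_j\ge 0$ with $P_j(x_1\cdots x_r)^{s_j}\in\ca I_B$; this is again a pure binomial, now lying in $\ca I_B$, so \ref{Q:structure_of_pure_binomials} supplies functions $\epsilon_j,t_j$ as in \ref{def:iter_subtraction} and monomials $\mu_j,\nu_j$ with $\nu_j\,P_j(x_1\cdots x_r)^{s_j} = \mu_j\,S(\epsilon_j,t_j)$; note $S(\epsilon_j,t_j)\neq 0$ since $P_j\neq 0$.

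For property (1), the elementary input is: a nonzero pure binomial $x^a - x^b$ has difference vector $a-b$ unchanged under multiplication by a monomial, and once we divide it by $\gcd(x^a,x^b)=x^{\min(a,b)}$ the resulting primitive pure binomial is determined up to sign by $a-b$ alone. Equating the two monomial terms (with signs) on each side of $\nu_j\,P_j(x_1\cdots x_r)^{s_j} = \mu_j\,S(\epsilon_j,t_j)$ shows that $P_j$ and $S(\epsilon_j,t_j)$ have the same difference vector up to sign, hence the same primitive part up to sign. Setting $m_j$ equal to the greatest common divisor of the two monomials of $S(\epsilon_j,t_j)$, we obtain $m_j\mid S(\epsilon_j,t_j)$ and $S(\epsilon_j,t_j)/m_j = \pm P_j$, which is in particular a pure binomial in $R$; this is (1). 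Property (2) is then immediate, since $(S(\epsilon_j,t_j)/m_j : 1\le j\le M) = (P_1,\dots,P_M) = J = \on{Sat}_{x_1\cdots x_r}\ca I_B$.

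I do not expect a serious obstacle; the genuine content is already carried by \ref{lem:sat_is_pure_binomial} and (especially) \ref{Q:structure_of_pure_binomials}, and this theorem is essentially a repackaging of the latter applied to a finite pure--binomial generating set of the saturation. The points needing care are organisational: ensuring that $m_j$ is \emph{exactly} the gcd of the two monomials of $S(\epsilon_j,t_j)$, so that $S(\epsilon_j,t_j)/m_j$ really lies in $R$ and is a pure binomial as (1) demands; correctly invoking idempotence of saturation to pass to primitive generators of $J$; and disposing of the degenerate case $J=0$.
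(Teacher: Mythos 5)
Your proof is correct and takes essentially the same route the paper intends: the paper's one-line proof (``combine \ref{lem:sat_is_pure_binomial} and \ref{Q:structure_of_pure_binomials}'') is exactly the argument you spell out, namely take a finite pure-binomial generating set of the saturation, clear denominators by multiplying by powers of $x_1\cdots x_r$ so that \ref{Q:structure_of_pure_binomials} applies, and then peel off the gcd of the two monomials of $S(\epsilon_j,t_j)$ to recover each generator as a primitive pure binomial. Your care about idempotence of saturation, about $m_j$ being exactly the gcd so that $S(\epsilon_j,t_j)/m_j$ lies in $R$, and about the degenerate case $J=0$, are precisely the bookkeeping the paper leaves implicit; there is no gap.
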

\begin{proof}
Combine \ref{lem:sat_is_pure_binomial} and \ref{Q:structure_of_pure_binomials}. 
\end{proof}

Given $t\colon \{ 1, \dots, n\} \to \bb N$ we define the $L^1$-length of $t$ to be the sum of its values. To prove \ref{thm:main} it suffices to show that we can choose each of the vectors $t_j$ in \ref{thm:binomials_from_subtraction} to have $L^1$-length  bounded by the constant $N$ of (\oref{eq:big_constant}). Given vectors $\epsilon$ of signs and $t$ of natural numbers as in \ref{def:iter_subtraction}, observe that the power of $x_j$ dividing $S(t, \epsilon)$ is given by
\begin{equation}\label{eq:power_of_x_dividing}
\min\left( \sum_{i=1}^n t(i) \on{ord}_{x_j} f_i^{\epsilon(i)}, \sum_{i=1}^n t(i) \on{ord}_{x_j} f_i^{-\epsilon(i)}  \right). 
\end{equation}
We say the \emph{minimum in \ref{eq:power_of_x_dividing} is achieved on the $+$ side} if 
\begin{equation*}
\sum_{i=1}^n t(i) \on{ord}_{x_j} f_i^{\epsilon(i)}\le \sum_{i=1}^n t(i) \on{ord}_{x_j} f_i^{-\epsilon(i)}, 
\end{equation*}
and we say the 
\emph{minimum in \ref{eq:power_of_x_dividing} is achieved on the $-$ side} if 
\begin{equation*}
\sum_{i=1}^n t(i) \on{ord}_{x_j} f_i^{\epsilon(i)}\ge \sum_{i=1}^n t(i) \on{ord}_{x_j} f_i^{-\epsilon(i)}. 
\end{equation*}

\begin{definition}
Given $\epsilon \colon \{1, \dots, n \} \to \{ +, -\}$ and $\delta\colon \{1, \dots, r\} \to \{ +, -\}$, we define
\begin{equation*}
T_{\epsilon, \delta} = \{t \in \bb N^n : \forall 1 \le i \le r, \text{ the minimum in \ref{eq:power_of_x_dividing} is achieved on the side } \delta(i) \}. 
\end{equation*}
\end{definition}
This set $T_{\epsilon, \delta}$ is a rational polyhedral cone in $\bb N^n$, and for fixed $\epsilon$ we have
\begin{equation}\label{eq:Tdelta_fill_up}
\bigcup_\delta T_{\epsilon, \delta} = \bb N^n. 
\end{equation}
Given $t \in T_{\epsilon, \delta}$, we write 
\begin{equation}
\phi_t = \frac{S(\epsilon, t)}{\prod_{j=1}^r x_j^{\sum_{i=1}^n t(i) \on{ord}_{x_j}f_i^{\epsilon(i)\delta(i)}}}, 
\end{equation}
which we write as a difference of monomials $\phi_t = \phi_t^+ - \phi_t^-$ in the usual way. From the definition of $T_{\epsilon, \delta}$ we see that $\phi_t \in R$, i.e. all exponents of the $x_i$ are non-negative. 

\begin{lemma}\label{lem:phi_t_in_ideal}
Fix $\epsilon$ and $\delta$ as above, and let $t, t_1, \dots, t_a \in T_{\epsilon, \delta}$ such that $t = t_1 + \cdots + t_a$. Then 
\begin{equation*}
\phi_t \in (\phi_{t_1}, \dots, \phi_{t_a}) \sub R. 
\end{equation*}
\end{lemma}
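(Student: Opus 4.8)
The plan is to prove the lemma by writing down an explicit expression of $\phi_t$ as an $R$-linear combination of $\phi_{t_1}, \dots, \phi_{t_a}$ with monomial coefficients. Set $P_k = \prod_{i=1}^n (f_i^{\epsilon(i)})^{t_k(i)}$ and $N_k = \prod_{i=1}^n (f_i^{-\epsilon(i)})^{t_k(i)}$, so $S(\epsilon, t_k) = P_k - N_k$, and likewise $P = \prod_i (f_i^{\epsilon(i)})^{t(i)}$, $N = \prod_i (f_i^{-\epsilon(i)})^{t(i)}$. Since $t = t_1 + \cdots + t_a$ we have $P = P_1 \cdots P_a$ and $N = N_1 \cdots N_a$. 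The starting point is the elementary ``telescoping'' identity
\begin{equation*}
P - N = \sum_{k=1}^a (P_1 \cdots P_{k-1})\,(P_k - N_k)\,(N_{k+1} \cdots N_a),
\end{equation*}
valid in any commutative ring (with empty products read as $1$), which one verifies by expanding and cancelling.

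Next I would divide through by the monomial denominators appearing in the definition of $\phi$. Write $g$ for the monomial by which $S(\epsilon,t)$ is divided to form $\phi_t$, and $g_k$ for the analogous monomial for $\phi_{t_k}$, so that $\phi_t = (P-N)/g$ and $\phi_{t_k} = (P_k - N_k)/g_k$ in $R$. Dividing the telescoping identity by $g$ rearranges to
\begin{equation*}
\phi_t = \sum_{k=1}^a c_k\,\phi_{t_k}, \qquad c_k = \frac{g_k}{g}\,(P_1 \cdots P_{k-1})\,(N_{k+1} \cdots N_a),
\end{equation*}
which is a priori an identity of Laurent polynomials. Everything now reduces to the claim that each coefficient $c_k$ actually lies in $R$, i.e. is a genuine monomial with non-negative exponents; that is the crux of the argument.

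To check this I would fix a variable $x_j$ and put $\alpha_k = \sum_i t_k(i)\on{ord}_{x_j} f_i^{\epsilon(i)} = \on{ord}_{x_j} P_k$ and $\beta_k = \sum_i t_k(i)\on{ord}_{x_j} f_i^{-\epsilon(i)} = \on{ord}_{x_j} N_k$, with $\alpha = \sum_k \alpha_k$, $\beta = \sum_k \beta_k$. The key point is that \emph{all} of $t, t_1, \dots, t_a$ lie in the same cone $T_{\epsilon,\delta}$, so the minimum in \ref{eq:power_of_x_dividing} is achieved on the same side $\delta(j)$ for each of them. If $\delta(j) = +$ this means $\alpha_\ell \le \beta_\ell$ for all $\ell$ and $\alpha \le \beta$, so $\on{ord}_{x_j} g = \alpha$ and $\on{ord}_{x_j} g_k = \alpha_k$, whence
\begin{equation*}
\on{ord}_{x_j} c_k = \Bigl(\sum_{\ell < k}\alpha_\ell + \sum_{\ell > k}\beta_\ell\Bigr) + \alpha_k - \alpha = \sum_{\ell > k}(\beta_\ell - \alpha_\ell) \ge 0;
\end{equation*}
the case $\delta(j) = -$ is symmetric and gives $\on{ord}_{x_j} c_k = \sum_{\ell < k}(\alpha_\ell - \beta_\ell) \ge 0$. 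Hence $c_k \in R$ for every $k$, and the displayed identity exhibits $\phi_t \in (\phi_{t_1}, \dots, \phi_{t_a})$. The only real obstacle here is careful bookkeeping — keeping the ``$--=+$'' sign convention straight and tracking the cancellations in the order computation — since conceptually the proof is just the telescoping identity together with the observation that membership in one common cone $T_{\epsilon,\delta}$ forces the monomial denominators $g, g_1, \dots, g_a$ to behave additively.
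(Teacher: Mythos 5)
Your proof is correct and is essentially the paper's argument made explicit: the telescoping identity you write down is exactly what the iterated subtraction $S(\cdots S(S(\phi_{t_1},\phi_{t_2}),\phi_{t_3})\cdots\phi_{t_a})$ produces, and the paper simply observes $\phi_t = \prod_\alpha \phi_{t_\alpha}^+ - \prod_\alpha \phi_{t_\alpha}^-$ and invokes $S(f,g)=g^+f+f^-g\in(f,g)$. The careful order-by-order check that $c_k\in R$ is actually unnecessary: since each $t_\ell$ lies in $T_{\epsilon,\delta}$ the monomials $\phi_{t_\ell}^{\pm}$ already lie in $R$, and your coefficient factors as $c_k = \phi_{t_1}^+\cdots\phi_{t_{k-1}}^+\,\phi_{t_{k+1}}^-\cdots\phi_{t_a}^-$, so its membership in $R$ is immediate.
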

\begin{proof}
Elementary manipulations yield
\begin{equation}
\begin{split}
\phi_t & = \prod_{\alpha = 1}^a \phi_{t_\alpha}^+  - \prod_{\alpha = 1}^a \phi_{t_\alpha}^-\\
& = S( \cdots S(S(\phi_{t_1}, \phi_{t_2}) \phi_{t_3}) \cdots \phi_{t_a}). \qedhere
\end{split}
\end{equation}
\end{proof}

\begin{theorem}
For each $\epsilon$ and each $\delta$, choose a generating set $\tau_{\epsilon, \delta}$ for the cone $T_{\epsilon, \delta}$. Then
\begin{equation}\label{eq:generating_set}
\bigcup_{\epsilon, \delta} \{ \phi_t : t \in \tau_{\epsilon, \delta}\}
\end{equation}
is a generating set for $\on{Sat}_{x_1\cdots x_r} \ca I_B$. 
\end{theorem}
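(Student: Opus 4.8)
\emph{Proof proposal.} The plan is to show that the ideal $J$ generated by the set displayed in \ref{eq:generating_set} coincides with $\on{Sat}_{x_1\cdots x_r}\ca I_B$, proving the two inclusions separately: ``$\subseteq$'' directly from the construction of $\phi_t$, and ``$\supseteq$'' by combining \ref{thm:binomials_from_subtraction} with \ref{lem:phi_t_in_ideal}.

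For $J\subseteq\on{Sat}_{x_1\cdots x_r}\ca I_B$ the point is that each generator of $J$ has the form $\phi_t$ for some $\epsilon$, $\delta$ and $t\in\tau_{\epsilon,\delta}\subseteq T_{\epsilon,\delta}$, and by construction $\phi_t$ times the monomial $\prod_{j=1}^r x_j^{\sum_{i=1}^n t(i)\on{ord}_{x_j}f_i^{\epsilon(i)\delta(i)}}$ equals $S(\epsilon,t)\in\ca I_B$. Since that monomial divides a sufficiently high power of $x_1\cdots x_r$, this exhibits $\phi_t$ as an element of the saturation, and hence so is every element of $J$.

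For the reverse inclusion I would first record the normalisation fact: if $t\in T_{\epsilon,\delta}$ then, by the very definition of $T_{\epsilon,\delta}$, the exponent $\sum_{i=1}^n t(i)\on{ord}_{x_j}f_i^{\epsilon(i)\delta(i)}$ is exactly the minimum in \ref{eq:power_of_x_dividing}, i.e. the largest power of $x_j$ dividing $S(\epsilon,t)$; thus $\phi_t$ is $S(\epsilon,t)$ with the greatest common divisor of its two monomials removed, independently of which $\delta$ with $t\in T_{\epsilon,\delta}$ is used. Now take the finite generating set $\{S(\epsilon_j,t_j)/m_j : 1\le j\le M\}$ of $\on{Sat}_{x_1\cdots x_r}\ca I_B$ supplied by \ref{thm:binomials_from_subtraction} and fix $j$. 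Using \ref{eq:Tdelta_fill_up}, choose $\delta_j$ with $t_j\in T_{\epsilon_j,\delta_j}$. Since $m_j\mid S(\epsilon_j,t_j)$, the monomial $m_j$ divides the gcd of the two monomials of $S(\epsilon_j,t_j)$, so $S(\epsilon_j,t_j)/m_j$ is a monomial multiple of $\phi_{t_j}$ and lies in the principal ideal $(\phi_{t_j})$. Finally, since $\tau_{\epsilon_j,\delta_j}$ generates the cone $T_{\epsilon_j,\delta_j}$, write $t_j$ as a sum (with repetitions) $t_j=\sum_\alpha s_\alpha$ with $s_\alpha\in\tau_{\epsilon_j,\delta_j}$; all the $s_\alpha$ and $t_j$ itself lie in the single cone $T_{\epsilon_j,\delta_j}$, so \ref{lem:phi_t_in_ideal} applies and gives $\phi_{t_j}\in(\phi_{s_\alpha}:\alpha)\subseteq J$. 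Hence $S(\epsilon_j,t_j)/m_j\in J$ for all $j$, so $\on{Sat}_{x_1\cdots x_r}\ca I_B\subseteq J$, and the two inclusions finish the argument.

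I do not expect a genuine obstacle once \ref{thm:binomials_from_subtraction} and \ref{lem:phi_t_in_ideal} are available; the proof is essentially bookkeeping. The one point requiring care is consistency of the auxiliary data: every invocation of \ref{lem:phi_t_in_ideal} must take place inside one fixed cone $T_{\epsilon,\delta}$, so for each $j$ one must first select a compatible $\delta_j$ and then decompose $t_j$ using generators of \emph{that} cone; and one must identify the $\phi_{t_j}$ coming out of \ref{thm:binomials_from_subtraction} (after stripping $m_j$ and the residual monomial) with the $\phi_{t_j}$ defined through $T_{\epsilon_j,\delta_j}$, which is exactly the normalisation fact above. Finiteness of each $\tau_{\epsilon,\delta}$ (Gordan's lemma) is what makes \ref{eq:generating_set} a finite set, relevant for \ref{def:norm} and \ref{thm:main}, but it is not needed for the present statement.
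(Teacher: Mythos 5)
Your proposal is correct and follows essentially the same route as the paper: the forward inclusion via the defining monomial factor, and the reverse inclusion by combining \ref{thm:binomials_from_subtraction} with \ref{eq:Tdelta_fill_up} and \ref{lem:phi_t_in_ideal}. You are somewhat more explicit than the paper about the bookkeeping step that $S(\epsilon_j,t_j)/m_j$ is a monomial multiple of $\phi_{t_j}$ (the paper simply asserts that the $\phi_t$ generate the saturation), but this is a matter of detail rather than a different argument.
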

\begin{proof}
Let $t \in \bb N^n$, then $S(\epsilon, t) \in \ca I_B$, and $\phi_t \in R$, hence by definition of the saturation we see that $\phi_t \in \on{Sat}_{x_1\cdots x_r} \ca I_B$. Conversely, \ref{thm:binomials_from_subtraction} tells us that the $\phi_t$ generate $\on{Sat}_{x_1\cdots x_r} \ca I_B$ as $t$ ranges over $\bb N^n$. We must justify why it suffices to consider only $t$ ranging over the set in (\oref{eq:generating_set}). Fixing $\epsilon$, we note that every $t \in \bb N^r$ lies in some $T_{\epsilon, \delta}$ by (\oref{eq:Tdelta_fill_up}), and then by \ref{lem:phi_t_in_ideal} it suffices to range over elements of a generating set for $T_{\epsilon, \delta}$. 
\end{proof}

Fixing $\epsilon$ and $\delta$, it remains to show that $T_{\epsilon, \delta}$ can be generated by vectors of length bounded by the constant $N$ from (\oref{eq:big_constant}). First, we have the elementary
%proof: given $c \in C$, write $c = 'sum_i l_iv_i$ with $l_i \in \bb R_{\ge 0}$. If all $l_i <1$, done. If any $l_i \ge 1$, subtract off that $v_i$, repeat. 
\begin{lemma}\label{lem:cone_gens}
Let $v_1, \dots, v_a \in \bb N^n$, and let $C$ be the intersection of $\bb N^n$ with the \emph{rational} cone spanned by the $v_i$. Then $C$ is generated by
\begin{equation*}
C \cap \left\{\sum_{i=1}^a \lambda_i v_i : \lambda_i \in [0, 1) \right\} \cup \{v_1, \dots, v_a\}. 
\end{equation*}
\end{lemma}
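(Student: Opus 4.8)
The plan is to show that an arbitrary element $w \in C$ can be written as a non-negative integer combination of the claimed generators. First I would write $w = \sum_{i=1}^a \mu_i v_i$ with $\mu_i \in \bb Q_{\ge 0}$, which is possible since $w$ lies in the rational cone spanned by the $v_i$. Decompose each coefficient as $\mu_i = \lfloor \mu_i \rfloor + \lambda_i$ with $\lfloor \mu_i \rfloor \in \bb N$ and $\lambda_i \in [0,1)$, and set $w' = \sum_{i=1}^a \lambda_i v_i$. Then $w = w' + \sum_{i=1}^a \lfloor \mu_i \rfloor v_i$, so it remains only to observe that $w'$ itself lies in $C$: indeed $w' = w - \sum_i \lfloor \mu_i \rfloor v_i$ is a difference of an element of $\bb N^n$ and a non-negative integer combination of the $v_i \in \bb N^n$, hence has integer entries, and those entries are non-negative because each is $\lambda_i$-weighted and equals the fractional remainder of a non-negative quantity — more carefully, $w'$ is a non-negative rational combination of the $v_i$, so it lies in the rational cone, and it has integer coordinates, so $w' \in C$. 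Thus $w' \in C \cap \{\sum_i \lambda_i v_i : \lambda_i \in [0,1)\}$, and $w$ is the sum of $w'$ and a non-negative integer combination of $\{v_1, \dots, v_a\}$, exhibiting $w$ in the semigroup generated by the claimed set.

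Conversely, every element of the claimed generating set lies in $C$: the $v_i$ do by hypothesis, and any $\sum_i \lambda_i v_i$ with $\lambda_i \in [0,1)$ that happens to lie in $\bb N^n$ lies in $C$ by definition of $C$ (it is both in $\bb N^n$ and in the rational cone). So the claimed set generates exactly $C$ as a monoid.

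The only genuine subtlety — and the step I would take care over — is the verification that $w' \in \bb N^n$, i.e. that subtracting off the integer parts $\lfloor \mu_i \rfloor v_i$ from $w$ really does leave a vector with non-negative integer entries. Integrality is immediate since $w$ and all $v_i$ have integer entries; non-negativity follows because $w' = \sum_i \lambda_i v_i$ is visibly a non-negative real combination of vectors in $\bb N^n$. There is no finiteness issue to worry about here: the statement as phrased only asserts that this (possibly infinite, but in fact the Gordan/Hilbert-basis-style intersection with a bounded box makes it finite) set generates $C$, and finiteness of $C \cap \{\sum_i \lambda_i v_i : \lambda_i \in [0,1)\}$ follows from boundedness of the box together with discreteness of $\bb Z^n$. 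I would state finiteness as an aside but it is not needed for the generating claim itself.
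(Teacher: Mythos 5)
The paper states this lemma without proof, labelling it ``elementary,'' so there is no in-paper argument to compare against. Your proof is correct and is exactly the standard Gordan-type argument one would supply: decompose the rational coefficients $\mu_i$ into integer part plus $[0,1)$ remainder, observe that $w' = \sum_i \lambda_i v_i$ has non-negative entries (as a non-negative combination of vectors in $\bb N^n$) and integer entries (as $w$ minus an integer combination of the $v_i$), hence lies in $C$, and conclude that $w = w' + \sum_i \lfloor \mu_i\rfloor v_i$ exhibits $w$ in the monoid generated by the claimed set. You are also right that finiteness of the box piece is not asserted in the lemma and is not needed for the generating claim, though it is what makes the result useful downstream.
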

Observe that the faces of $T_{\epsilon, \delta}$ are defined by the equations
\begin{equation}\label{eq:hyperplanes}
\sum_{i=1}^n t(i) \on{ord}_{x_j} f_i ^{\epsilon(i)} = \sum_{i=1}^n t(i) \on{ord}_{x_j} f_i ^{-\epsilon(i)} , 
\end{equation}
thus the extremal rays of $T_{\epsilon, \delta}$ are obtained by solving $n-1$ equations of the form \ref{eq:hyperplanes}. Let $\beta$ be the maximum of the absolute values of the $\on{ord}_{x_j}f_i = b_{i,j}$ as $i$ and $j$ vary. By Siegel's lemma, the $L^1$-length of such a (non-zero) solution is then bounded above by
\begin{equation*}
(2n\beta)^{n-1}. 
\end{equation*}
From \ref{lem:cone_gens}, and cutting into simplicial cones, we see that $T_{\epsilon, \delta}$ can be generated by vectors of length at most $N = n(2n\beta)^{n-1}$, concluding the proof. 

\subsection{Proof of \ref{cor:connected}}\label{sec:proof_of_cor}

Let $G$ be a generating set for the saturation as in \ref{def:norm}. Each $g \in G$ is a pure difference binomial, say $g = x^{c^+} - x^{c^-}$ with $c^+$, $c^- \in \bb N^r$, and can be written in the form
\begin{equation*}
g = \sum_{i=1}^N \epsilon_i m_i f_{j_i}, 
\end{equation*}
with $\epsilon_i \in \{ 1, 0 , -1\}$, $m_i$ monomials, and $f_j$ as in \ref{sec:proof_of_main}. Writing $c = c^+ - c^-$, it suffices (by \ref{thm:DS}) to show that $c$ can be written as $c = \sum_{i=1}^n a_i b_i$ with $\sum_{i=1}^n \abs{a_i} \le N$. 

We wish to prove this by induction on $N$, but this makes no sense as $N$ is the norm. Instead we rephrase things slightly so that induction makes sense: 
\begin{lemma}
Let $M$ be a positive integer, and suppose that the expression
\begin{equation}\label{eq:sum_to_M}
\sum_{i=1}^M \epsilon_i m_i f_{j_i}, 
\end{equation}
is a pure binomial $x^{c^+} - x^{c^-}$, where $\epsilon_i \in \{ 1,  -1\}$, and the $m_i$ are monomials. Then there exist integers $a_1, \dots, a_n$ with $\sum_{i=1}^n \abs{a_i} \le M$ and $c^+ - c^- = \sum_{i=1}^n a_i b_i$. 
\end{lemma}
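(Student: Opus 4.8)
The plan is to argue by induction on the number $M$ of summands appearing in \ref{eq:sum_to_M}; this is exactly why the statement has been recast in terms of $M$ rather than the (inductively meaningless) norm. Throughout, note that each $f_{j_i}$ is one of the fixed generators $f_1, \dots, f_n$, so $j_i \in \{1, \dots, n\}$. If $c^+ = c^-$ the binomial is zero and we may take all $a_i = 0$, so assume $c^+ \neq c^-$. For the base case $M = 1$ we have $\epsilon_1 m_1 f_{j_1} = x^{c^+} - x^{c^-}$, and comparing the two monomials on each side gives $c^+ - c^- = \pm b_{j_1}$, the sign being that of $\epsilon_1$; taking $a_{j_1} = \pm 1$ and all other $a_i = 0$ gives $\sum_i \abs{a_i} = 1 = M$.

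For the inductive step, take $M \geq 2$. The key move is to peel off a single summand so that the remainder is again a \emph{pure} binomial with one fewer summand, in the spirit of the proof of \ref{Q:structure_of_pure_binomials}. Expanding \ref{eq:sum_to_M} as a $\bb Z$-combination of monomials, the net coefficient of $x^{c^+}$ is $+1$, so among the $2M$ signed monomials $\epsilon_i m_i f_{j_i}^+$ and $-\epsilon_i m_i f_{j_i}^-$ at least one equals $x^{c^+}$ (with coefficient $+1$); after reindexing it comes from $i = 1$, so either $\epsilon_1 = +1$ with $m_1 f_{j_1}^+ = x^{c^+}$, or $\epsilon_1 = -1$ with $m_1 f_{j_1}^- = x^{c^+}$. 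Consider the first case. Then $\epsilon_1 m_1 f_{j_1} = x^{c^+} - m_1 f_{j_1}^-$, so
\begin{equation*}
P' \coloneqq \sum_{i=2}^M \epsilon_i m_i f_{j_i} = \bigl(x^{c^+} - x^{c^-}\bigr) - \bigl(x^{c^+} - m_1 f_{j_1}^-\bigr) = m_1 f_{j_1}^- - x^{c^-},
\end{equation*}
which is a pure binomial that is a sum of $M - 1$ terms of the required shape. Writing $c'$ for the exponent vector of the monomial $m_1 f_{j_1}^-$, the induction hypothesis applied to $P'$ gives integers $a_1, \dots, a_n$ with $\sum_i \abs{a_i} \leq M - 1$ and $c' - c^- = \sum_i a_i b_i$. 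Since $c^+ - c' = b_{j_1}^+ - b_{j_1}^- = b_{j_1}$, adding these gives $c^+ - c^- = \sum_i a_i b_i + b_{j_1}$, and replacing $a_{j_1}$ by $a_{j_1} + 1$ yields a representation of $L^1$-length at most $M$. The second case is symmetric: there $\epsilon_1 m_1 f_{j_1} = x^{c^+} - m_1 f_{j_1}^+$, so $P' = m_1 f_{j_1}^+ - x^{c^-}$, and after applying the induction hypothesis one decreases $a_{j_1}$ by $1$ instead; alternatively one may first assume $B$ is closed under negation (as in the proof of \ref{Q:structure_of_pure_binomials}), which does not increase the $L^1$-bound since combining the coefficients on $b_i$ and $-b_i$ only shrinks the $L^1$-length.

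The single point demanding care is the peeling step itself: one must verify that the chosen summand cancels the monomial $x^{c^+}$ \emph{exactly}, leaving a genuine two-term binomial $P'$ rather than a longer polynomial, so that the induction hypothesis is actually applicable to it. This is precisely where the hypothesis that the expression in \ref{eq:sum_to_M} is a pure binomial is used. Everything else is bookkeeping with exponent vectors, and the quantitative heart of the matter is simply that the $L^1$-length of the combination of the $b_i$ increases by exactly $1$ whenever $M$ decreases by $1$, so it never exceeds the starting value $M$.
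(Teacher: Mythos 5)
Your proof is correct and follows essentially the same route as the paper's: induct on $M$, peel off a single summand whose leading monomial matches one of the two monomials of the binomial (you peel the one matching $x^{c^+}$, the paper peels the one matching $x^{c^-}$, which is a cosmetic difference), observe that the remaining $M-1$ terms again sum to a pure binomial, apply the induction hypothesis, and account for the $L^1$-length growing by exactly one. Your version is a little more explicit about why some summand must contribute $+x^{c^+}$ (coefficient bookkeeping) and about the two sign cases, whereas the paper wraps that into a ``without loss of generality, after changing signs'' step; the substance is identical.
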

It is clear that the lemma (applied with $M = N$) implies \ref{cor:connected}, so it only remains to verify the lemma. 
\begin{proof}
For a warmup we treat first the case $M=1$. Then 
\begin{equation*}
x^{c^+} - x^{c^-} = \pm m(x^{b_{j_1}^+} - x^{b_{j_1}^-}) = \pm (x^{d + b_{j_1}^+} - x^{d + b_{j_1}^-})
\end{equation*}
where we write $m = x^d$ for some $d \in \bb N^r$. Hence 
\begin{equation*}
c^+ - c^- = \pm ((d + b_{j_1}^+) - (d + b_{j_1}^-)) = \pm b_{j_1}
\end{equation*}
as required. 

We prove the general case by induction on $M$. First, up to changing some signs, observe that we can re-order the terms in the expression \ref{eq:sum_to_M} so that $m_1f_{j_1}^+ = x^{c^+}$, hence we can assume that $\sum_{i=1}^{M-1} \epsilon_i m_i f_{j_i}$ is also a pure binomial, say 
\begin{equation*}
\sum_{i=1}^{M-1} \epsilon_i m_i f_{j_i} = x^{c'^+} - x^{c'^-}. 
\end{equation*}
Then by our induction hypothesis we can write $c'^+ - c'^- = \sum_{i=1}^n a'_i b_i$ with $\sum_{i=1}^n \abs{a'_i} \le M-1$. Then
\begin{equation*}
\sum_{i=1}^{M-1} \epsilon_i m_i f_{j_i} = x^{c'^+} - x^{c'^-} = x^{c^+} - x^{c^-} - \epsilon_M m_M (x^{b_{j_M}^+} - x^{b_{j_M}^-}), 
\end{equation*}
and we can (again changing some signs, without loss of generality) assume that $\epsilon_M = +1$ and that $x^{c^-} = m_M x^{b_{j_M}^+} $. Writing $m_M = x^d$, we see
\begin{itemize}
\item
$x^{c^+} = x^{c'^+}$, so $c^+ = c'^+$;
\item $x^{c^-} = x^{d + b_{j_M}^+}$, so $c^- = d + b_{j_M}^+$;
\item $x^{c'^-} = m_Mx^{b_{j_M}^-} = x^{d + b_{j_M}^-}$, so $c'^- = d + b_{j_M}^-$. 
\end{itemize}
Putting these together we see 
\begin{equation*}
c^+ - c^- = c'^+ - c^- = (c'^+ - c'^-) + (b_{j_M}^+ - b_{j_M}^-) = (c'^+ - c'^-) + b_{j_M}, 
\end{equation*}
from which the result is immediate. 
\end{proof}

%Should we be worried that this bound does not depend on $\beta$ if $n=1$? No, I think it's reasonable that the thing is already saturated. Maybe point this case out. 
%
%Can use LLL to seek short vectors $\beta$. 

%Here we obv want to use that $\on{ord}_{x_j}f_i = b_{i,j}$, i.e. the $j$th entry of the vector $b_i$. So actually the matrix $A$ is not relevant at all - can simplify our bound. We needed to compute the Smith normal form anyway for this to be at all useful, so I don't think that's a bad thing. Give an extended example about how to compute all this form the SNF. 

%\subsection{The standard saturation algorithm via Groebner bases}

%\section{Appendix: SAGE code}

\bibliographystyle{alpha} %amsplain}
\bibliography{../../prebib.bib}

\end{document}